\numberwithin{equation}{section}
\newtheorem{theorem}{Theorem}[section]
\newtheorem{lemma}[theorem]{Lemma}
\newtheorem{proposition}[theorem]{Proposition}
\newtheorem{examples}[theorem]{Examples}
\newtheorem{definition}{Definition}[section]
\newtheorem{corollary}[theorem]{Corollary}
\newcommand{\cl}[1]{\mathcal{#1}} 
\newcommand{\bb}[1]{\mathbb{#1}}
\newcommand{\Map}[1]{\mathrm{Map}(#1)} 
\newcommand{\Lat}[1]{\mathrm{Lat}(#1)} 
\newcommand{\Alg}[1]{\mathrm{Alg}(#1)} 
\begin{document}

\title{Bilattices and Morita equivalence of masa bimodules}

\author[G.~K.~Eleftherakis]{G. K. Eleftherakis}

\address{G. K. Eleftherakis\\ University of Patras\\Faculty of Sciences\\
Department of Mathematics\\265 00 Patras Greece }

\email{gelefth@math.upatras.gr}


\subjclass[2000]{Primary 47L35; Secondary 47L05}

\date{}

\maketitle

\begin{abstract}We define an equivalence relation between bimodules over maximal abelian 
selfadjoint algebras (masa bimodules) which we call 
spatial Morita equivalence. We prove that two reflexive masa bimodules are spatially Morita 
equivalent 
iff their (essential)  bilattices are isomorphic. We also prove that if $\cl S^1, \cl S^2$ are 
bilattices which correspond to reflexive masa bimodules $\cl U_1, \cl U_2$ and $f: \cl S^1\rightarrow 
\cl S^2$ is an onto bilattice homomorphism, then:\\(i) If $\cl U_1$ is synthetic, 
then $\cl U_2$ is synthetic. \\ (ii) If $\cl U_2$ contains a nonzero compact 
(or a finite or a rank 1) operator, then 
$\cl U_1$ also contains a nonzero compact (or a finite or a rank 1) operator. 
\end{abstract}

\section{Introduction}

If $H_1, H_2$ are Hilbert spaces, $B(H_1, H_2)$ is the space of bounded operators from $H_1$ into 
$H_2.$ When $H_1=H_2=H$ we write $B(H)=B(H_1, H_2).$ In this 
case $\cl P(B(H))$ is the set of orthogonal projections of $H.$ If $\cl U$ is a subset of $B(H_1, H_2)$ 
then its reflexive hull, \cite{ls}, is
$$\mathrm{Ref}(\cl U)=\{T\in B(H_1, H_2): T(\xi )\in \overline{\mbox{span}(\cl U\xi) }\;\;\forall \;\xi \;\in \;
H_1 \}. $$ If $\cl L$ is a lattice of orthogonal projections of a Hilbert space $H$, then 
the corresponding algebra $$ \mathrm{Alg}(\cl L) =\{T\in B(H): TL(H)\subset L(H)\;\;\forall \;L\;\in 
\; \cl L\}$$ is reflexive in the sense of the above definition.

An important class of non-selfadjoint operator algebras are the CSL algebras introduced by Arveson
 \cite{arv}. A CSL (commutative  subspace lattice) $\cl L$ is a commuting set of orthogonal projections acting on a Hilbert space 
which contains the zero and identity operators.  We also assume that $\cl L$ is closed in arbitrary suprema 
and infima. The corresponding CSL algebra is $\mathrm{Alg}(\cl L).$ 
In the sequel, a maximal abelian selfadjoint algebra is called masa. If $\cl D_i$ is a masa 
acting on the Hilbert space $H_i, i=1,2$ and $\cl U$ is a subspace of 
$B(H_1, H_2)$ such that $\cl D_2\cl U\cl D_1\subset \cl U$ we call $\cl U$ masa bimodule. It is not difficult to 
see that a reflexive algebra is a CSL algebra iff it contains a  
masa. Therefore the reflexive masa bimodules are generalization of CSL algebras.

If $\cl A$ is an algebra acting on a Hilbert space $H,$ then the set 
$$ \mathrm{Lat} (\cl A)=\{P\in \cl P(B(H)): P^\bot \cl A P=0\}$$ is a lattice. A lattice $\cl L$  
is called reflexive if $\cl L=\mathrm{Lat} (\mathrm{Alg}(\cl L) ).$ The CSLs are reflexive 
lattices, \cite{arv}. 

The interaction between reflexive algebras $\cl A$ and lattices  $\mathrm{Lat} (\cl A)$ is very 
interesting. The correspondence between CSL algebras and CSL lattices has been generalized 
to the case of masa bimodules by Shulman and Turowska, \cite{st}. 
In their paper, to every masa bimodule there corresponds 
at least one bilattice. 

In the present paper, we study bilattice homomorphisms between the bilattices of reflexive masa bimodules. 
We prove that if $\cl S^1, \cl S^2$ are isomorphic strongly closed commutative bilattices, then the corresponding 
masa bimodules $\cl U_1, \cl U_2$ are spatially Morita equivalent in the sense that the one is generated by the 
other, see Theorem \ref{31}. Conversely, if $\cl U_1, \cl U_2$ are spatially Morita equivalent, 
then their corresponding (essential) bilattices are isomorphic.

If $\cl U$ is a weak* closed masa bimodule, then there exists \cite{arv, st} a smallest  
weak* closed masa bimodule $ \cl U_{min} $ containing in $\cl U$ such that $$ \mathrm{Ref}(\cl U_{min
})=\mathrm{Ref}(\cl U).$$   In case $\cl U=\cl U_{min} $, we call $\cl U$  a synthetic 
masa bimodule. The concept of a synthetic bimodule is important in operator theory. In Section 4 
we will prove that if $\cl S^1, \cl S^2$ are strongly closed commutative bilattices, $\cl U_1, \cl U_2$ 
are their corresponding masa bimodules,  and there exists an onto bilattice homomorphism 
$f: \cl S^1\rightarrow \cl S^2$, then $\cl U_1$'s being synthetic implies that 
$\cl U_2$ is synthetic. In this case, if $\cl U_2$ contains a nonzero compact (finite rank, rank 1) operator,  
then $\cl U_1$ also  contains a nonzero compact (finite rank, rank 1) operator. We stress 
that the problem to characterise all CSLs such that the corresponding CSL algebras contain a nonzero 
compact operator is still open. 

In the last section we will give a new proof of the inverse image theorem, \cite{st}, using 
the results of this paper. It is important that our proof gives 
more information  about the sets satisfying the assumptions of the  inverse image theorem:
If $E$ is the preimage of the Borel set $E_1$ with respect to the appropriate Borel mappings and there
 exists a  nonzero compact (or a finite rank or a rank 1) operator supported on $E$, then 
there exists a  nonzero compact (or a finite rank or a rank 1) operator supported on $E_1,$ Theorem \ref{54}.

 If $\cl A$ is a von Neumann algebra, we denote by $\cl P(\cl A)$ the subset of its projections. Let $\cl U\subset B(H_1, H_2)$ be 
a subspace. We denote by $\Map {\cl U}$ the map sending every projection $P\in \cl P(B(H_1))$ to the projection onto the subspace of 
$H_2$ generated by the vectors of the form $\{UP\xi : U\in \cl U, \xi \in H_1\}.$ This map is sup preserving. Suppose that 
$\phi =\Map {\cl U}.$ We denote by $\phi ^*$ the $\Map {\cl U^*}.$ We also write 
$$\cl S_{2,\phi }=\{\phi (P ): P\in \cl P(B(H_1))\}, \;\;\;\cl S_{1,\phi }=\cl S_{2, \phi ^*}^\bot .$$ J. A. 
 Erdos has proved, \cite{erd},
 that $ \cl S_{1,\phi } $ is $\wedge$-complete, $ \cl S_{2, \phi } $ is $\vee $-complete $\phi (\cl S_{1,\phi } )
=\cl S_{2, \phi } ,$  the restriction of $\phi $ to $\cl S_{1,\phi } $ is $1-1$, and if $\cl U$ is a reflexive space, then 
$$\cl U=\{T\in B(H_1, H_2): \phi (P)^\bot TP=0\;\;\forall \;\;P\;\in \;\cl S_{1,\phi }\}.$$ If $\cl U$ is a masa bimodule, then 
the elements of $\cl S_{1,\phi }$ and  $\cl S_{2,\phi }$  commute.

A  set $\cl S\subset \cl P(B(H_1))\times \cl P(B(H_2))$ is called a bilattice if $(0,0), (I, 0), (0,I)\in \cl S$ and whenever 
$(P_1, Q_1), (P_2, Q_2)\in \cl S$, then $(P_1\wedge P_2, Q_1\vee Q_2), (P_1\vee P_2, Q_1\wedge Q_2)\in \cl S.$ We use the following notation. 
$$ \cl S_l=\{P: (P,Q)\in \cl S \}, \cl S_r=\{Q: (P,Q)\in \cl S \}   $$
$$\mathfrak M (\cl S)=\{T\in B(H_1, H_2): QTP=0\;\;\forall \;\;(P,Q)\in \cl S\}.$$ 
The space $\mathfrak M (\cl S)$ is reflexive in the usual sense. If the elements of 
 $\cl S_l$ and $ \cl S_r$ commute, then $\mathfrak M (\cl S)$ 
is a masa bimodule. If $\cl U\subset B(H_1, H_2)$ is a space, we denote by $Bil(\cl U)$ the following set:
$$\{(P,Q)\in \cl P(B(H_1))\times \cl P(B(H_2)): Q\cl UP=0\}.$$ Clearly  $Bil(\cl U)$ is a bilattice.

If $\cl L_1, \cl L_2$ are CSLs acting on $H_1, H_2$, respectively, we say that a map 
$\theta : \cl L_1\rightarrow \cl L_2$ is 
a CSL homorphism if $\theta $ preserves arbitrary suprema  and infima. 
 If additionally $\theta $ is $1-1$ and onto, we say that 
it is a CSL isomorphism. In this case, we denote by $Op(\theta )$ the space
$$\{T\in B(H_1, H_2): \theta (P)^\bot TP=0\;\;\forall \;\;P\;\;\in \cl L_1\}.$$

\medskip

In what follows, if $\cl X$ is a set of operators, we denote by $[\cl X]$ the weak* closure of the linear span of $\cl X.$ We 
recall the following:

\begin{definition}\label{11} \cite{ele} Let $\cl A_i$ be a weak* closed algebra acting on $H_i, i=1,2.$ We call 
$\cl A_1$ and $ \cl A_2$ 
spatially Morita equivalent if there exist an $\cl A_1-\cl A_2$ module $\cl X$ and an $\cl A_2-\cl A_1$ module $\cl Y$ such that 
$$\cl A_1=[\cl X \cl Y], \;\;\cl A_2=[\cl Y \cl X].$$
\end{definition}

\begin{theorem}\label{12}\cite{ele} Let $\cl A_1= \mathrm{Alg}(\cl L_1) , \cl A_2=\mathrm{Alg}(\cl L_2) $ be CSL algebras. Then $\cl A_1$ and $\cl A_2$ are spatially Morita equivalent if and only if 
there exists a CSL isomorphism $\theta : \cl L_1\rightarrow \cl L_2.$ In this case 
$$\cl A_1=[Op(\theta ^{-1})Op(\theta )],\;\;\; \cl A_2=[Op(\theta )Op(\theta ^{-1})].$$
\end{theorem}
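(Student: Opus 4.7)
The plan is to prove both implications, with the $(\Leftarrow)$ direction being the substantial one. Given a CSL isomorphism $\theta : \cl L_1 \rightarrow \cl L_2$, I would set $\cl X := Op(\theta^{-1}) \subset B(H_2, H_1)$ and $\cl Y := Op(\theta) \subset B(H_1, H_2)$. The bimodule properties are verified directly: for $P \in \cl L_1$, $A_1 \in \cl A_1$, $A_2 \in \cl A_2$, and $Y \in \cl Y$, using $A_1 P = P A_1 P$, $A_2 \theta(P) = \theta(P) A_2 \theta(P)$, and $\theta(P)^\bot Y P = 0$, one computes $\theta(P)^\bot (A_2 Y A_1) P = 0$, so $\cl A_2 \cl Y \cl A_1 \subset \cl Y$. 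The inclusion $\cl X \cl Y \subset \cl A_1$ follows by splitting $XY = X \theta(P) Y + X \theta(P)^\bot Y$ for every $P \in \cl L_1$: the first term satisfies $P^\bot X \theta(P) = 0$ since $X \in Op(\theta^{-1})$, and the second satisfies $\theta(P)^\bot Y P = 0$.

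The crux is the reverse inclusion $\cl A_1 \subset [\cl X \cl Y]$, which I expect to be the main obstacle. Invoking Erdos' theorem as quoted in the introduction, $\overline{\cl Y P H_1} = \theta(P) H_2$ and $\overline{\cl X \theta(P) H_2} = P H_1$ for every $P \in \cl L_1$; combining these gives $\overline{[\cl X \cl Y] P H_1} = P H_1$. Thus $[\cl X \cl Y]$, viewed as a weak*-closed bimodule over the diagonal masa of $\cl A_1$, has sup-preserving map $\Map{[\cl X \cl Y]}$ agreeing with $\Map{\cl A_1}$ on $\cl L_1$. Since by Erdos' characterisation a reflexive masa bimodule is determined by this data, and both $[\cl X \cl Y]$ and $\cl A_1$ share it, one concludes $[\cl X \cl Y] = \cl A_1$. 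The technical point requiring care is extending the agreement of the sup-preserving maps from $\cl L_1$ to the full domain $\cl S_{1,\phi}$ appearing in Erdos' theorem.

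For the $(\Rightarrow)$ direction, given bimodules $\cl X, \cl Y$ implementing the Morita equivalence, I would define $\theta(P) := \Map{\cl Y}(P)$ for $P \in \cl L_1$. Since $\cl A_2 \cl Y \subset \cl Y$, the subspace $\overline{\cl Y P H_1}$ is $\cl A_2$-invariant, so $\theta(P) \in \mathrm{Lat}(\cl A_2) = \cl L_2$. Define $\theta' : \cl L_2 \rightarrow \cl L_1$ symmetrically via $\cl X$. The hypothesis $[\cl X \cl Y] = \cl A_1$ yields $\overline{\cl X \cl Y P H_1} = \overline{\cl A_1 P H_1} = P H_1$, hence $\theta' \circ \theta = \id_{\cl L_1}$; by symmetry $\theta \circ \theta' = \id_{\cl L_2}$. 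Sup preservation of $\theta$ is automatic from the definition of $\Map{\cdot}$, and bijectivity between complete lattices then forces inf preservation, so $\theta$ is a CSL isomorphism. The explicit formulas $\cl A_1 = [Op(\theta^{-1})\, Op(\theta)]$ and $\cl A_2 = [Op(\theta)\, Op(\theta^{-1})]$ follow by applying the $(\Leftarrow)$ construction to this $\theta$.
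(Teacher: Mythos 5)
The paper does not actually prove Theorem \ref{12}; it is imported verbatim from \cite{ele}, so your proposal can only be judged on its own terms. Your $(\Rightarrow)$ direction is essentially sound: defining $\theta=\Map{\cl Y}|_{\cl L_1}$, using $\Lat{\cl A_2}=\cl L_2$ (reflexivity of CSLs), the multiplicativity $\Map{[\cl X\cl Y]}=\Map{\cl X}\circ\Map{\cl Y}$, and unitality of $\cl A_i$ to get mutually inverse, order-preserving maps is the standard route, and a bijective order isomorphism of complete lattices does preserve arbitrary suprema and infima.

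The $(\Leftarrow)$ direction, however, has a genuine gap at exactly the hard point. First, the identity $\overline{Op(\theta)PH_1}=\theta(P)H_2$ does not follow from Erdos' theorem as quoted: that theorem describes $\Map{\cl U}$ for a given reflexive $\cl U$, and from the defining relations of $Op(\theta)$ you only get $\Map{Op(\theta)}(P)\leq\theta(P)$; the reverse inequality is a nontrivial density statement requiring proof. Second, and fatally, even granting that $\Map{[\cl X\cl Y]}$ and $\Map{\cl A_1}$ agree, you cannot conclude $[\cl X\cl Y]=\cl A_1$. Erdos' characterisation determines a \emph{reflexive} space from its map; $[\cl X\cl Y]$ is only a weak*-closed masa bimodule, not known to be reflexive at that stage. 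Equality of the maps yields only $\mathrm{Ref}([\cl X\cl Y])=\mathrm{Ref}(\cl A_1)=\cl A_1$, hence $(\cl A_1)_{min}\subset[\cl X\cl Y]\subset\cl A_1$. Two distinct weak*-closed masa bimodules with the same reflexive hull exist precisely when synthesis fails, and there are non-synthetic CSL algebras, so "both share the data, hence equal" is exactly the synthesis fallacy. The entire content of the formula $\cl A_1=[Op(\theta^{-1})Op(\theta)]$ is the inclusion $\cl A_1\subset[\cl X\cl Y]$ (equivalently, by the bimodule property, that $I\in[\cl X\cl Y]$), and this is where the cited proof in \cite{ele} must do real work; your argument does not supply it.
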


We call a space $\cl U\subset B(H_1, H_2)$ nondegenerate if $\cl U(H_1)$ (resp. $\cl U^*(H_2)$) is dense in $H_2$ (resp. in $H_1$).

We extend the notion of spatial Morita equivalence to the setting of spaces:

\begin{definition}\label{13} We fix masas acting on the Hilbert spaces $H_i, K_i, i=1,2.$  
Let $\cl U_1\subset B(H_1, H_2), \cl U_2\subset B(K_1, K_2)$ be nondegenerate bimodules over the above masas. We call them 
spatially Morita equivalent if there exist bimodules over  the same masas   
$$ \cl V_1\subset B(H_1, K_1), \;\;\cl V_2\subset B(H_2, K_2),\;\;\cl W_1\subset B(K_1, H_1) ,\;\;\cl W_2\subset B(K_2, H_2) $$
 such that 

$(i)$ $$ \cl U_1=[\cl W_2\cl U_2\cl V_1], \;\;\cl U_2=[\cl V_2\cl U_1\cl W_1], $$ and 

$(ii)$ the spaces $$[\cl V_1 \cl W_1], \;\;[\cl W_1\cl V_1], \;\;[\cl V_2\cl W_2],\;\;[\cl W_2\cl V_2]$$ 
are unital algebras.
\end{definition}

\begin{examples}\em{
$(i)$ We can easily prove that if two unital algebras containing masas are spatially Morita equivalent as algebras, then they are spatially Morita equivalent 
as spaces.

$(ii)$ We call the spaces $\cl U_1, \cl U_2$ unitarily equivalent if there exist unitaries $W, V$ such that $\cl U_1=W\cl U_2V.$ Clearly 
unitarily equivalent masa bimodules are spatially Morita equivalent.

$(iii)$ In \cite{ept}, two spaces $\cl U_1\subset B(H_1, H_2), \cl U_2\subset B(K_1, K_2)$ are called TRO equivalent 
if there exist TROs $ \cl M_1\subset B(K_1, H_1), \;\;\cl M_2\subset B(K_2,H_2)$ such that 
$$ \cl U_1=[\cl M_2\cl U_2\cl M_1^*], \;\;\;\;\cl U_2=[\cl M_2^*\cl U_2\cl M_1]. $$ We can see that TRO equivalent masa bimodules are 
spatially Morita equivalent.}
\end{examples}

\section{Bilattices and their homomorphisms}

\begin{theorem}\label{21} Let $\cl S$ be a commutative bilattice with the property that if $\{(P_i, Q_i)\}_i$ is an 
arbitrary subset of $\cl S$, then $$(\vee_i P_i, \wedge_i Q_i ), \;\;(\wedge_i P_i,  \vee_i Q_i)\in \cl S.$$ 
Then $\cl S$ is strongly closed.
\end{theorem}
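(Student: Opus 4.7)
The plan is to take a net $(P_\alpha, Q_\alpha) \in \cl S$ converging strongly to a pair $(P,Q) \in \cl P(B(H_1))\times \cl P(B(H_2))$, and to exhibit $(P,Q)$ as a pair of iterated lattice operations applied to elements of $\cl S$, so that the stated closure under arbitrary pairings of suprema and infima forces $(P,Q) \in \cl S$.

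First, I would record two preliminary facts. (a) Since $\cl S_l$ is a commuting family in $B(H_1)$ and the strong limit of a net respects commutation relations via separate strong continuity of multiplication on bounded sets, $P$ commutes with every $P_\beta$; symmetrically $Q$ commutes with every $Q_\beta$. So for the remainder of the argument, all projections appearing live in one commutative von Neumann algebra per side, and every countable (or netwise) lattice expression is legal. (b) For a net of mutually commuting projections that converges strongly to a projection, the lattice limit superior and limit inferior coincide with the strong limit, i.e.
$$P \;=\; \bigwedge_\alpha \bigvee_{\beta\geq\alpha} P_\beta \;=\; \bigvee_\alpha \bigwedge_{\beta\geq\alpha} P_\beta,$$
and analogously for $Q_\alpha$. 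This is the standard principle that, after identifying the commutative von Neumann algebra they generate with an $L^\infty$, projections become characteristic functions of measurable sets and strong convergence becomes the usual measure-theoretic convergence, for which set-theoretic $\liminf$ and $\limsup$ agree with the operator lattice expressions and coincide with the limit.

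Then the main step is a double application of the hypothesis. For each index $\alpha$, apply the closure of $\cl S$ under arbitrary (sup, inf) pairings to the tail family $\{(P_\beta, Q_\beta) : \beta \geq \alpha\} \subset \cl S$ to conclude
$$\Bigl(\bigvee_{\beta\geq\alpha} P_\beta,\; \bigwedge_{\beta\geq\alpha} Q_\beta\Bigr) \in \cl S.$$
As $\alpha$ grows, the first coordinate is a decreasing family in $\cl S_l$ and the second is an increasing family in $\cl S_r$, so a second application of the closure hypothesis — this time using (inf, sup) — yields
$$\Bigl(\bigwedge_\alpha \bigvee_{\beta\geq\alpha} P_\beta,\; \bigvee_\alpha \bigwedge_{\beta\geq\alpha} Q_\beta\Bigr) \in \cl S.$$
By fact (b) this pair equals $(P,Q)$, and the theorem follows.

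The one delicate point, and the main obstacle to making the argument fully rigorous, is fact (b) for general nets rather than sequences. I would handle it by noting that the decreasing net $R_\alpha := \bigvee_{\beta\geq\alpha} P_\beta$ of commuting projections converges strongly to $\bigwedge_\alpha R_\alpha$ (decreasing nets of commuting projections always converge strongly to their meet), while $P_\alpha \leq R_\alpha$ forces the strong limit $P$ of $P_\alpha$ to satisfy $P \leq \bigwedge_\alpha R_\alpha$; the reverse inequality is obtained symmetrically from $\bigwedge_\alpha \bigvee_{\beta\geq\alpha}P_\beta \leq \bigvee_\alpha \bigwedge_{\beta\geq\alpha}(I-P_\beta)^\perp$ and the dual argument applied to the strongly convergent net $I-P_\alpha \to I-P$. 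Once fact (b) is in place the rest is bookkeeping with the bilattice closure axioms.
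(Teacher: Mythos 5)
Your argument breaks down at fact (b), and this is not a repairable technicality --- the claim is false. Strong convergence of a net (even a sequence) of commuting projections does \emph{not} imply that the limit equals the lattice $\limsup$ or $\liminf$. Concretely, on $L^2[0,1]$ take the ``typewriter'' sequence of multiplication projections $P_n=M_{\chi_{E_n}}$ where $E_1=[0,1]$, $E_2=[0,\tfrac12]$, $E_3=[\tfrac12,1]$, $E_4=[0,\tfrac14],\dots$ march repeatedly across $[0,1]$ with shrinking length. Since $\mu(E_n)\to 0$, $P_n\to 0$ strongly; but every point lies in infinitely many $E_n$, so $\bigwedge_n\bigvee_{m\geq n}P_m=I\neq 0$. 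Your own patch for the ``delicate point'' in fact only proves the sandwich $\bigvee_\alpha\bigwedge_{\beta\geq\alpha}P_\beta\leq P\leq\bigwedge_\alpha\bigvee_{\beta\geq\alpha}P_\beta$ (the displayed inequality you invoke for the reverse direction asserts $\limsup\leq\liminf$, which is exactly what fails), and the sandwich does not suffice: your double application of the hypothesis puts $(\limsup_\alpha P_\alpha,\ \liminf_\alpha Q_\alpha)$ and $(\liminf_\alpha P_\alpha,\ \limsup_\alpha Q_\alpha)$ into $\cl S$, but a bilattice is not hereditary under shrinking one coordinate (e.g.\ for $\cl S=\{(P,Q):P\in\cl L,\ Q\in\cl L^{\bot},\ P\leq I-Q\}$ the first coordinate must lie in $\cl L$), so membership of $(P,Q)$ does not follow. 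Fact (a) is fine, but the core of the proof does not survive.

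The obstruction you ran into is real and is precisely why the paper does not argue this way. Its proof imports two nontrivial inputs: Edwards' theorem that a commutative lattice of projections is strongly closed if and only if it is meet and join complete (applied to $\cl S_l$ and $\cl S_r$ separately, giving $(P,Q)\in\cl S_l\times\cl S_r$ for the strong limit), and the Shulman--Turowska identity $Bil(\mathfrak M(\cl S))\cap(\cl S_l\times\cl S_r)=\cl S$. The only thing that passes to strong limits directly is the bilinear relation: $Q_i\,\mathfrak M(\cl S)\,P_i=0$ for all $i$ gives $Q\,\mathfrak M(\cl S)\,P=0$ by joint strong continuity of multiplication on bounded sets, whence $(P,Q)\in Bil(\mathfrak M(\cl S))\cap(\cl S_l\times\cl S_r)=\cl S$. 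If you want to salvage your approach you would essentially have to reprove Edwards' characterization plus a reflexivity statement for bilattices, neither of which follows from lattice bookkeeping on the net itself.
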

\begin{proof} The proof is based on the fact that a commutative lattice  is strongly closed 
iff it is meet and join complete, \cite{ed}. By Theorem 3.1 in  \cite{st}

\begin{equation} \label{bil} Bil( \mathfrak {M}(\cl S)) \cap ( \cl S_l\times \cl S_r )=\cl S.\end{equation}

We can easily check that $\cl S_l, \cl S_r$ are complete lattices. Therefore they are CSLs. If $\{(P_i, Q_i)\}_i\subset \cl S$ is a net 
such that $P$  is the limit of the net $(P_i)_i$ and $Q$  is the limit of the net $(Q_i)_i,$ since 
$\cl S_l, \cl S_r$ are CSLs we have $(P, Q)\in \cl S_l\times \cl S_r. $ 
Now since $Q_i \mathfrak {M}(\cl S) P_i=0$ for all $i$, we have 
$Q \mathfrak {M}(\cl S) P=0.$  By (\ref{bil}), $(P, Q)\in \cl S.$ So $\cl S$ is strongly closed.
\end{proof}

\begin{definition}\label{22}
 Let $ \cl S^1, \cl S^2 $ be strongly closed commutative bilattices. 
If $$\phi:  \cl S_l^1\rightarrow 
 \cl S_l^2, \;\; \psi : \cl S_r^1\rightarrow  \cl S_r^2 $$ 
 are CSL homomorphisms such that
$$(\phi (P),\psi(Q) )\in \cl S^2\;\;\forall \;\;(P,Q)\in \cl S^1,$$ we say that 
$\phi \oplus \psi $ is a bilattice homomorphism. In the case where the $\phi ,\psi $ 
are 1-1 and onto, we call  $\phi \oplus \psi $ a bilattice isomorphism.

\end{definition}

Let $\cl D_i\subset B(H_i), i=1,2$ be masas. A bilattice $\cl S\subset B(H_1)\times B(H_2)$ is called a $\cl D_1\times \cl D_2$ bilattice 
if $\cl S_r\subset \cl D_2,  \cl S_l\subset \cl D_1.$ If $\cl U$ is a reflexive $\cl D_2-\cl D_1$ bimodule, the bilattice 
$$\cl S_0=\{(P, Q)\in \cl D_1\times \cl D_2: Q\cl UP=0\}$$ is a strongly closed commutative bilattice satisfying $\cl U=\mathfrak {M}(\cl S_0
) ,$ \cite{st}. We see that $\cl S_0$ is the biggest $\cl D_1\times \cl D_2$ bilattice $\cl S$ such that $\mathfrak {M}(\cl S)=
\cl U.$ We are going to find the smallest. 

Suppose that $\cl U$ is a nondegenerate reflexive $\cl D_2-\cl D_1$ bimodule and $\phi =\Map{\cl U}.$ Also suppose that $\cl L_i$ 
is the CSL generated by $\cl S_{i, \phi }, i=1,2.$ We define $$\cl S=\{(P, Q)\in \cl L_1\times \cl L_2^\bot : \phi (P)
\leq Q^\bot \}.$$

\begin{theorem}\label{23}

$(i)$ $\cl S$ is a strongly closed commutative bilattice satisfying $\cl S_l=\cl L_1, \cl S_r=\cl L_2^\bot , \cl U= \mathfrak {M}(\cl S) .$

$(ii)$ If $\cl S^1$  is a strongly closed commutative bilattice such that $\cl U=\mathfrak {M}(\cl S^1)$, then $\cl S\subset \cl S^1.$

\end{theorem}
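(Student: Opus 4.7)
For part (i), I plan to verify the four assertions sequentially, each time using that $\phi$ is sup-preserving on all of $\cl P(B(H_1))$. The bilattice axioms follow at once: $(0,0),(0,I),(I,0)\in \cl S$ since $\phi(0)=0$, $\phi(I)=I$ by nondegeneracy, and $0,I\in \cl L_1$ and $0,I\in \cl L_2^\bot$; closure under $(\vee,\wedge)$ and $(\wedge,\vee)$ uses $\phi(P_1\vee P_2)=\phi(P_1)\vee \phi(P_2)\le (Q_1\wedge Q_2)^\bot$ and $\phi(P_1\wedge P_2)\le \phi(P_i)$ together with De Morgan on commuting projections. The equalities $\cl S_l=\cl L_1$ and $\cl S_r=\cl L_2^\bot$ are obtained by noting that $(P,0)\in \cl S$ whenever $P\in \cl L_1$ and $(0,Q)\in \cl S$ whenever $Q\in \cl L_2^\bot$, with the reverse inclusions trivial. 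Strong closure follows from Theorem \ref{21} after checking arbitrary sup/inf closure, again from the sup-preservation of $\phi$. Finally, $\cl U\subset\mathfrak M(\cl S)$ follows from $\phi(P)^\bot\cl UP=0$ together with $Q\le \phi(P)^\bot$, while $\mathfrak M(\cl S)\subset \cl U$ uses that for each $P\in \cl S_{1,\phi}$ one has $\phi(P)\in \cl S_{2,\phi}\subset \cl L_2$, so $(P,\phi(P)^\bot)\in \cl S$; Erdos's characterisation then places $T\in \mathfrak M(\cl S)$ inside $\cl U$.

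For part (ii), the Shulman--Turowska identity (\ref{bil}) yields $\cl S^1=Bil(\cl U)\cap(\cl S^1_l\times \cl S^1_r)$, and since $\cl S\subset Bil(\cl U)$ (as $\phi(P)\le Q^\bot$ gives $Q\cl UP=0$), it suffices to show $\cl L_1\subset \cl S^1_l$ and $\cl L_2^\bot\subset \cl S^1_r$. Because $\cl S^1_l$ is itself a CSL and $\cl L_1$ is the CSL generated by $\cl S_{1,\phi}$, the first inclusion reduces to $\cl S_{1,\phi}\subset \cl S^1_l$; the second, using $\cl S_{1,\phi^*}=\cl S_{2,\phi}^\bot$, follows by applying the same argument to $\cl U^*=\mathfrak M(\cl S^{1,\top})$ with the transposed bilattice $\cl S^{1,\top}=\{(Q,P):(P,Q)\in \cl S^1\}$. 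Thus the whole of (ii) comes down to proving $\cl S_{1,\phi}\subset \cl S^1_l$.

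I plan to establish this last inclusion by contradiction via a rank-one operator. Assume $P_0\in \cl S_{1,\phi}\setminus \cl S^1_l$, and set $\tilde P_0:=\bigvee\{P\in \cl S^1_l:P\le P_0\}$ and $\hat P_0:=\bigwedge\{P\in \cl S^1_l:P\ge P_0\}$, both in $\cl S^1_l$ with $\tilde P_0\le P_0<\hat P_0$ strictly. The $1$-$1$-ness of $\phi|_{\cl S_{1,\phi}}$ combined with $P_0=\phi^*(\phi(P_0)^\bot)^\bot$ identifies $P_0$ as the largest projection $P$ satisfying $\phi(P)\le \phi(P_0)$; in particular $\phi(\hat P_0)\not\le \phi(P_0)$. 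Pick a nonzero $\xi\in(P_0-\tilde P_0)H_1$ and let $P_\xi\in \cl S^1_l$ be the smallest element of $\cl S^1_l$ whose range contains $\xi$; then $P_\xi\not\le P_0$ (else $P_\xi\le \tilde P_0$, contradicting $\xi\perp \tilde P_0 H_1$), whence $\phi(P_\xi)\not\le \phi(P_0)$ by the same maximality. Writing $\hat\phi_{\cl S^1}(P):=\bigl(\bigvee\{Q\in \cl S^1_r:(P,Q)\in \cl S^1\}\bigr)^\bot\ge \phi(P)$ for the sup-preserving map of $\cl S^1$, I select a nonzero $\eta\in \phi(P_0)^\bot H_2\cap \hat\phi_{\cl S^1}(P_\xi) H_2$. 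The rank-one operator $T=\eta\otimes \xi^*$ then satisfies $\phi(P_0)^\bot TP_0=\eta\otimes\xi^*\ne 0$, so $T\notin \cl U$ by Erdos; meanwhile one checks $QTP=0$ for every $(P,Q)\in \cl S^1$, giving the sought contradiction with $\mathfrak M(\cl S^1)=\cl U$. The main obstacle lies exactly in this last verification: the clean cases are $P\le \tilde P_0$ (then $P\xi=0$) and $P\ge P_\xi$ (then $Q\le \hat\phi_{\cl S^1}(P_\xi)^\bot$ annihilates $\eta$); the awkward case is $P\in \cl S^1_l$ with $P\not\ge P_\xi$ yet $P\xi\ne 0$, which forces one to refine the choice of $\xi$ (using atoms of $\cl S^1_l$ when available, or a spectral decomposition with respect to the commutative von Neumann algebra generated by $\cl S^1_l$) so that \emph{$P\xi\ne 0$} upgrades to \emph{$P\ge P_\xi$}.
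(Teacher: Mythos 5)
Your part (i) and the opening reduction of part (ii) track the paper: the paper likewise verifies strong closure of $\cl S$ via Theorem \ref{21} and sup/order preservation of $\phi$, gets $\cl U=\mathfrak M(\cl S)$ from Erdos's characterisation, and in (ii) reduces everything, via the Shulman--Turowska identity $\cl S^1=Bil(\cl U)\cap(\cl S^1_l\times\cl S^1_r)$, to the two inclusions $\cl L_1\subset\cl S^1_l$ and $\cl L_2^\bot\subset\cl S^1_r$. The gap is exactly where you flag it, and it is not a removable technicality. Your strategy needs: whenever $P_0\in\cl S_{1,\phi}\setminus\cl S^1_l$, there exists a nonzero rank-one $\eta\otimes\xi^*$ in $\mathfrak M(\cl S^1)$ with $\xi\in P_0H_1$ and $\eta\in\phi(P_0)^\bot H_2$. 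But membership of $\eta\otimes\xi^*$ in $\mathfrak M(\cl S^1)$ forces $\eta$ into the range of $\bigwedge\{\hat\phi_{\cl S^1}(P): P\in\cl S^1_l,\ P\xi\neq 0\}$, not merely of $\hat\phi_{\cl S^1}(P_\xi)$. When $\cl S^1_l$ has no atoms (e.g.\ a continuous nest), every nonzero $\xi$ admits $P\in\cl S^1_l$ with $P\xi\neq 0$ and $P\not\geq P_\xi$, so no refinement of $\xi$ reaches the situation you need; and more fundamentally, a reflexive masa bimodule can contain far fewer rank-one operators than its size suggests (this is the central theme of the Erdos--Katavolos--Shulman paper cited in the references), so there is no a priori supply of rank-one separators. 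As written, the contradiction cannot be derived.

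The paper proves the key inclusion by a short module-invariance argument that avoids rank-one operators entirely. For $(P,Q)\in\cl S^1$ one has $Q\in\cl S^1_r$, hence $Q\,\Alg{\cl S_r^1}^*=Q\,\Alg{\cl S_r^1}^*Q$, and therefore $Q\,\Alg{\cl S_r^1}^*\,\cl U P=Q\,\Alg{\cl S_r^1}^*Q\,\cl U P=0$; since $\cl U=\mathfrak M(\cl S^1)$ this gives $\Alg{\cl S_r^1}^*\,\cl U\subset\cl U$. Consequently each $\phi(P)$, being the projection onto $\overline{\cl U P H_1}$, is invariant for $\Alg{\cl S_r^1}^*$, i.e.\ $\phi(P)\in\Lat{\Alg{\cl S_r^1}^*}=(\cl S_r^1)^\bot$ by reflexivity of the CSL $\cl S_r^1$. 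Hence $\cl S_{2,\phi}^\bot\subset\cl S_r^1$ and, $\cl S_r^1$ being a complete commutative lattice, $\cl L_2^\bot\subset\cl S_r^1$; the inclusion $\cl L_1\subset\cl S_l^1$ follows symmetrically. You should replace your rank-one construction with this argument; the rest of your write-up then goes through.
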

 \begin{proof} 

$(i)$ We shall prove that $\cl S$ is a strongly closed commutative bilattice. By Theorem \ref{21}, it suffices to prove that if 
$(P_i, Q_i)_i$ is an 
arbitrary subset of $\cl S$, then $$ (\vee_i P_i, \wedge_i Q_i ), \;\;(\wedge_i P_i,  \vee_i Q_i)\in \cl S.$$ Indeed, since 
$ \phi (P_i)\leq Q^\bot _i $ for all $i$ and $\phi $ is sup preserving, we have 
$$\phi (\vee _iP_i)\leq (\wedge_i Q_i)^\bot\Rightarrow  (\vee_i P_i, \wedge_i Q_i )\in \cl S. $$ On the other 
hand, since $\phi $ is order preserving, we have 
$$\phi (\wedge  _iP_i)\leq \wedge _i\phi (P_i)\leq (\vee _i Q_i)^\bot\Rightarrow  (\wedge _i P_i, \vee _i Q_i )\in \cl S. $$ 
We shall prove that $\cl U= \mathfrak {M}(\cl S) .$ We have 
$$\mathfrak {M}(\cl S) =\{ T\in B(H_1, H_2):  QTP=0\;\;\forall \;\;(P,Q)\;\in \;\cl S\}\subset $$$$
\{T\in B(H_1, H_2):  \phi (P)^\bot TP=0\;\;\forall \;\;P\;\in \;\cl S_{1,\phi }\}=\cl U.$$
Also if $(P,Q)\in \cl S$, then $\phi (P)\leq Q^\bot $ so since $\phi (P)^\bot 
\cl UP=0\Rightarrow Q\cl UP=0.$ Thus,   $\cl U\subset \mathfrak {M}(\cl S) 
.$

$(ii)$ Let $\cl S^1$ be a strongly closed commutative bilattice such that $\cl U= \mathfrak {M}(\cl S^1) .$ 
If $(P,Q)\in \cl S^1$ then $Q\in \cl S_r^1.$ Therefore
$$ \mathrm{Alg}(\cl S_r^1) Q =Q \mathrm{Alg}(\cl S_r^1) Q   \Rightarrow Q \mathrm{Alg}(\cl S_r^1)^* =Q \mathrm{Alg}(\cl S_r^1) ^*Q   .$$
Thus, $$ Q\mathrm{Alg}(\cl S_r^1)^*\cl U P=Q \mathrm{Alg}(\cl S_r^1)^*Q\cl U P=0.$$
The conclusion is  
$\mathrm{Alg}(\cl S_r^1)^*\cl U\subset \cl U.$ So for an arbitrary projection $P$ we have 
$$\mathrm{Alg}( \cl S_r^1 )^*\cl UP\subset \cl UP\Rightarrow \phi (P)\in (\cl S_r^1)^\bot  .$$ Since $ \cl S_{2,\phi } =
\{\phi (P): P\in \cl P(B(H_1))\}$, we have 
$$ \cl S_{2,\phi } \subset ( \cl S_r^1 )^\bot  \Rightarrow  (\cl S_{2,\phi } )^\bot \subset \cl S_r^1 \Rightarrow 
\cl L_2^\bot \subset  \cl S_r^1 . $$ Similarly we can prove that $\cl L_1 \subset  \cl S_l^1 .$ 
Now if $$(P,Q)\in \cl S\Rightarrow Q\cl UP=0\Rightarrow (P,Q)\in Bil( \mathfrak {M}(\cl S^1) )\cap (\cl S_l^1 \times \cl S_r^1 ).$$ 
So by Theorem 3.1 in \cite{st} $(P,Q)\in \cl S^1.$
\end{proof}

\begin{definition}\label{24} We call $\cl S$ the essential bilattice of $\cl U.$
\end{definition}

\section{Morita equivalence of masa bimodules}

\begin{theorem}\label{31} Let $\cl S^1, \cl S^2$ be isomorphic, strongly closed commutative bilattices and $\cl U_1=\mathfrak {M}(\cl S^1) ,
\cl U_2=\mathfrak {M}(\cl S^2) .$ Then $\cl U_1$ and $\cl U_2$ are spatially Morita equivalent.
\end{theorem}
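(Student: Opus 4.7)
The plan is to construct the Morita-equivalence data directly from the bilattice isomorphism using the operator-space construction $Op(\cdot)$ of Theorem \ref{12}. Let $\theta\oplus\sigma$ be the given bilattice isomorphism, with CSL isomorphisms $\theta:\cl S_l^1\to\cl S_l^2$ and $\sigma:\cl S_r^1\to\cl S_r^2$ satisfying $(\theta(P),\sigma(Q))\in\cl S^2$ whenever $(P,Q)\in\cl S^1$. I set
$$\cl V_1=Op(\theta),\quad \cl W_1=Op(\theta^{-1}),\quad \cl V_2=Op(\sigma),\quad \cl W_2=Op(\sigma^{-1}).$$
Condition $(ii)$ of Definition \ref{13} is then immediate from Theorem \ref{12}, which identifies $[\cl W_1\cl V_1]=\mathrm{Alg}(\cl S_l^1)$ and $[\cl V_1\cl W_1]=\mathrm{Alg}(\cl S_l^2)$, together with the analogous equalities on the $\sigma$-side; each of these is a unital algebra.

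The harder half of condition $(i)$ splits into two steps. First I aim to prove the inclusion $[\cl W_2\cl U_2\cl V_1]\subset\cl U_1$ by direct computation. Fix $(P,Q)\in\cl S^1$ and operators $V\in\cl V_1$, $T\in\cl U_2$, $W\in\cl W_2$; I will chain three facts. From $V\in Op(\theta)$, $VP=\theta(P)VP$; from $(\theta(P),\sigma(Q))\in\cl S^2$ and $T\in\mathfrak{M}(\cl S^2)$, $T\theta(P)=\sigma(Q)^\bot T\theta(P)$; and finally $QW\sigma(Q)^\bot=0$. The last identity is the key point: the general adjoint relation $Op(\alpha)^\ast=Op(\alpha^{-1})$ places $W^\ast$ in $Op(\sigma)$, whence $\sigma(Q)^\bot W^\ast Q=0$ and the claim follows by taking adjoints. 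Chaining these gives $QWTVP=QW\sigma(Q)^\bot T\theta(P)VP=0$, so $WTV\in\mathfrak{M}(\cl S^1)=\cl U_1$. The mirror-image computation yields $[\cl V_2\cl U_1\cl W_1]\subset\cl U_2$.

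For the reverse inclusion $\cl U_1\subset[\cl W_2\cl U_2\cl V_1]$ I use the unital algebras from condition $(ii)$ as a source of "local identities": since $I_{H_1}\in[\cl W_1\cl V_1]$ and $I_{H_2}\in[\cl W_2\cl V_2]$, each $U\in\cl U_1$ equals $I_{H_2}\cdot U\cdot I_{H_1}$, so $\cl U_1\subset[\cl W_2\cl V_2]\cdot\cl U_1\cdot[\cl W_1\cl V_1]$. On an elementary product $W_2V_2UW_1V_1$ the middle piece $V_2UW_1$ lies in $\cl U_2$ by the inclusion already proved, hence the whole product lies in $\cl W_2\cl U_2\cl V_1\subset[\cl W_2\cl U_2\cl V_1]$. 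Extending from elementary products to weak-$\ast$ limits via the separate weak-$\ast$ continuity of operator multiplication finishes the first equality of condition $(i)$; the symmetric equality $\cl U_2=[\cl V_2\cl U_1\cl W_1]$ is proved in the same way.

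The step I expect to be the main obstacle is recognising the adjoint relation $Op(\sigma^{-1})^\ast=Op(\sigma)$ at the right moment. Without it, there is no way to cancel the projection $\sigma(Q)^\bot$ once it appears behind $W$, and the bilattice compatibility $(\theta(P),\sigma(Q))\in\cl S^2$ rather than only the two separate CSL isomorphisms would fail to enter the computation.
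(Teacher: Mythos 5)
Your overall architecture is sound, and your route to the hard inclusion $\cl U_1\subset[\cl W_2\cl U_2\cl V_1]$ --- sandwiching $U=I_{H_2}UI_{H_1}$ between the unital algebras $[\cl W_2\cl V_2]$ and $[\cl W_1\cl V_1]$ and absorbing the middle block $\cl V_2\cl U_1\cl W_1$ into $\cl U_2$, with the limits taken one side at a time --- is valid and in fact more economical than the paper's argument, which instead realises $\cl U_1$ as a corner of $\mathrm{Alg}(\cl L_1)=[Op(\theta^{-1})Op(\theta)]$ for an amplified $2\times 2$ CSL isomorphism and then disentangles that corner via two auxiliary bilattices $\cl Z^1,\cl Z^2$. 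However, there is a genuine error exactly at the point you flag as critical: the identity $Op(\alpha)^*=Op(\alpha^{-1})$ is false. Computing directly, $Op(\alpha)=\{T:\alpha(P)^\bot TP=0\}$ has adjoint $Op(\alpha)^*=\{S:PS\alpha(P)^\bot=0\}$, whereas $Op(\alpha^{-1})=\{S:P^\bot S\alpha(P)=0\}$; already for $\alpha=\mathrm{id}$ on a nest $\{0,P,I\}$ the first is the lower-triangular and the second the upper-triangular algebra. Consequently, with your choices $\cl V_2=Op(\sigma)$ and $\cl W_2=Op(\sigma^{-1})$ neither of the cancellations your computation needs --- $QW\sigma(Q)^\bot=0$ for $W\in\cl W_2$ in proving $\cl W_2\cl U_2\cl V_1\subset\cl U_1$, and $\sigma(Q)VQ^\bot=0$ for $V\in\cl V_2$ in the mirror inclusion --- actually holds, so both ``easy'' inclusions break down as written.

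The gap is localised and repairable: the spaces you want are $\cl W_2=Op(\sigma)^*=\{T:QT\sigma(Q)^\bot=0\ \ \forall\, Q\in\cl S_r^1\}$ and $\cl V_2=Op(\sigma^{-1})^*=\{T:\sigma(Q)TQ^\bot=0\ \ \forall\, Q\in\cl S_r^1\}$, which are precisely the spaces the paper uses. With these definitions your two chained computations go through verbatim, and condition $(ii)$ of Definition \ref{13} survives because $[\cl W_2\cl V_2]=[Op(\sigma^{-1})Op(\sigma)]^*=\mathrm{Alg}(\cl S_r^1)^*$ and $[\cl V_2\cl W_2]=[Op(\sigma)Op(\sigma^{-1})]^*=\mathrm{Alg}(\cl S_r^2)^*$ by Theorem \ref{12} followed by taking adjoints; these are still unital algebras. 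Once that substitution is made, the remainder of your proof is complete.
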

\begin{proof} Suppose that $\cl U_1\subset B(H_1, H_2)$ and $\cl U_2\subset B(K_1, K_2)$ are nondegenerate spaces and $\phi \oplus \psi :
 \cl S^1\rightarrow \cl S^2$ is a bilattice isomorphism. We define the CSLs 
$$\cl L_i=\{ P\oplus Q^\bot : (P, Q)\in \cl S^i\}, \;\;i=1,2.$$
The map $$\theta : \cl L_1\rightarrow \cl L_2: P\oplus Q^\bot \rightarrow\phi ( P)\oplus\psi ( Q)^\bot $$ is 
a CSL isomorphism. Therefore by Theorem \ref{12}, 
\begin{equation}\label{ex} \mathrm{Alg}(\cl L_1) =[Op(\theta ^{-1}) )Op(\theta )],\;\;\;\mathrm{Alg}(\cl L_2) =
[Op(\theta )Op(\theta^{-1} ) ].
\end{equation}
We also define the spaces:
$$\cl V_1=Op(\phi )\subset B(H_1, K_1)$$
$$\cl V_2=\{T\in B(H_2, K_2): \psi (Q)TQ^\bot =0\;\;\forall \;\;Q\;\in \;\cl S_r^1\}$$
$$\cl W_1=\{T\in B(K_1, H_1): P^\bot T\phi (P) =0\;\;\forall \;\;P\;\in \;\cl S_l^1\}$$
$$\cl W_2=\{T\in B(K_2, H_2): QT\psi (Q)^\bot =0\;\;\forall \;\;Q\;\in\; \cl S_r^1\}.$$
By Theorem \ref{12} we have 
$$ [\cl W_1\cl V_1]=\mathrm{Alg} (\cl S^1_l),  \;\;\;\;[\cl V_1\cl W_1]=\mathrm{Alg} (\cl S^2_l),   $$
$$ [\cl W_2\cl V_2]=\mathrm{Alg} (\cl S_r^l)^*,  \;\;\;\;[\cl V_2\cl W_2]=\mathrm{Alg} (\cl S^2_r)^*.   $$
So the above spaces are unital algebras. We also need the following strongly closed commutative bilattices:
$$\cl Z^1=\{(P, \psi (Q)): \;\;(P, Q)\;\;\in \;\;\cl S^1\}$$
$$\cl Z^2=\{(\phi (P), Q): \;\;(P, Q)\;\;\in \;\;\cl S^1\}.$$

A calculation shows that 
$$ Op(\theta ) =\left ( \begin{array}{clr} \cl V_1 & 0 \\  \mathfrak {M}(\cl Z^1) & \cl V_2
\end{array}\right),$$
$$ Op(\theta^{-1} ) =\left ( \begin{array}{clr} \cl W_1 & 0 \\  \mathfrak {M}(\cl Z^2) & \cl W_2
\end{array}\right).$$
Using (\ref{ex}) we obtain
\begin{equation}\label{exx} \cl U_1=[\mathfrak {M}(\cl Z^2) \cl V_1+\cl W_2\mathfrak {M}(\cl Z^1) ].
\end{equation}
We can see that $$\cl W_2\cl U_2\subset \mathfrak {M}(\cl Z^2) $$ and 
$$  \cl V_2\mathfrak {M}(\cl Z^2) \subset \cl U_2\Rightarrow \cl W_2\cl V_2 \mathfrak {M}(\cl Z^2) \subset \cl W_2\cl U_2. $$ 
Since $[\cl W_2\cl V_2] $ is a unital algebra, $\mathfrak {M}(\cl Z^2)=[ \cl W_2\cl U_2]. $ Similarly we can prove 
$\mathfrak {M}(\cl Z^1)=[ \cl U_2\cl V_1]. $  Thus by (\ref{exx}) $\cl U_1=[\cl W_2\cl U_2\cl V_1].$ Similarly we can prove 
$\cl U_2=[\cl V_2\cl U_1 \cl W_1].$ The proof is complete.\end{proof}

We are going to prove that two spatially Morita equivalent reflexive masa bimodules have isomorphic essential bilattices. In the rest of this section,
 we fix reflexive masa bimodules $\cl U_1\subset B(H_1, H_2), \;\;\;\cl U_2\subset B(K_1, K_2)$ and spaces 
$\cl V_1, \cl V_2, \cl W_1, \cl W_2$ satisfying conditions (i) and (ii) of Definition \ref{13}. Suppose that the essential bilattices of 
$\cl U_1$ and $\cl U_2$ are $\cl S^1$ and $\cl S^2$, respectively. We write
$$\phi_i=\Map{\cl U_i},\;\;\; \psi_i=\Map {\cl W_i} , \;\;\; \chi_i=\Map{\cl V_i} , i=1,2.$$

\begin{lemma}\label{32} 

$(i)$ $$\chi _i( \Lat {[\cl W_i\cl V_i]} ) =\Lat{[\cl V_i \cl W_i]}, \;\;i=1,2.$$

$(ii)$ $$\psi _i(\Lat {[\cl V_i\cl W_i]})=\Lat{[\cl W_i \cl V_i]}, \;\;i=1,2.$$

\end{lemma}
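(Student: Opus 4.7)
The strategy is to show that, for each $i\in\{1,2\}$, $\chi_i$ and $\psi_i$ restrict to mutually inverse bijections between $\Lat{[\cl W_i\cl V_i]}$ and $\Lat{[\cl V_i\cl W_i]}$; both (i) and (ii) are immediate consequences. I treat $i=1$ in detail; the case $i=2$ and part (ii) follow by interchanging the roles of $\cl V$ and $\cl W$ (or the two indices).

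\emph{Forward direction.} Let $P\in\Lat{[\cl W_1\cl V_1]}$, so that $\chi_1(P)K_1$ is the closed linear span of $\{V\xi:V\in\cl V_1,\,\xi\in PH_1\}$. Because the set of operators in $B(K_1)$ preserving a fixed closed subspace is a weak*-closed linear subspace, to conclude $\chi_1(P)\in\Lat{[\cl V_1\cl W_1]}$ it is enough to verify invariance of $\chi_1(P)K_1$ under each product $V'W'$ with $V'\in\cl V_1$, $W'\in\cl W_1$. The key rearrangement is $V'W'(V\xi)=V'(W'V)\xi$; since $W'V\in\cl W_1\cl V_1\subset[\cl W_1\cl V_1]$ and $P$ is invariant, $W'V\xi\in PH_1$, so $V'W'V\xi\in\cl V_1 PH_1\subset\chi_1(P)K_1$. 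Applying the same computation with $\cl V_1$ and $\cl W_1$ interchanged shows that $\psi_1$ maps $\Lat{[\cl V_1\cl W_1]}$ into $\Lat{[\cl W_1\cl V_1]}$.

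\emph{Inverse identity.} I prove $\chi_1\circ\psi_1=\id$ on $\Lat{[\cl V_1\cl W_1]}$; the symmetric identity $\psi_1\circ\chi_1=\id$ is analogous. For $Q\in\Lat{[\cl V_1\cl W_1]}$, the subspace $\chi_1(\psi_1(Q))K_1$ is the closed linear span of $\{VW\zeta:V\in\cl V_1,\,W\in\cl W_1,\,\zeta\in QK_1\}$. Each product $VW$ lies in $[\cl V_1\cl W_1]$ and leaves $QK_1$ invariant, giving $\chi_1(\psi_1(Q))K_1\subset QK_1$. Conversely, the hypothesis that $[\cl V_1\cl W_1]$ is unital provides a net $T_\alpha\in\mathrm{span}(\cl V_1\cl W_1)$ with $T_\alpha\to I_{K_1}$ weak*. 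For any $\zeta\in QK_1$, the vectors $T_\alpha\zeta$ lie in $\mathrm{span}(\cl V_1\cl W_1\,QK_1)\subset\chi_1(\psi_1(Q))K_1$ and converge weakly to $\zeta$, so $\zeta$ belongs to the weak---and therefore norm---closure of that span.

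Combining the two directions, $\chi_1$ is a bijection between $\Lat{[\cl W_1\cl V_1]}$ and $\Lat{[\cl V_1\cl W_1]}$, which is part (i) for $i=1$. The case $i=2$ and part (ii) are obtained by the obvious relabellings, so no new ideas are needed. The only step that goes beyond the purely algebraic manipulation $V'W'V=V'(W'V)$ is the weak*-to-weak-to-norm approximation in the inverse identity, and this is precisely where the unital-algebra hypothesis of Definition \ref{13}(ii) is used; I expect it to be the only delicate point in the write-up.
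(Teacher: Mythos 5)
Your proof is correct and takes essentially the same route as the paper: the forward inclusions rest on the same rearrangement $V'W'V=V'(W'V)$ (which the paper writes at the projection level as $\chi_1(P)^\bot\cl V_1\cl W_1\chi_1(P)=\chi_1(P)^\bot\cl V_1 P\cl W_1\chi_1(P)=0$), and the reverse inclusion on the identity $\chi_1\circ\psi_1=\Map{[\cl V_1\cl W_1]}=\id$ on $\Lat{[\cl V_1\cl W_1]}$, which is exactly where the paper also invokes the unitality hypothesis of Definition \ref{13}(ii). Your explicit weak*-net approximation of $I$ is just a more hands-on version of the paper's observation that a unital algebra's $\mathrm{Map}$ fixes its invariant projections, so no substantive difference.
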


\begin{proof} Choose $P\in \Lat {[ \cl W_1\cl V_1 ]} .$ We have 
$$P^\bot \cl W_1\cl V_1 P=0\Rightarrow P^\bot \cl W_1\chi _1(P)=0.$$
Now $$ \chi _1(P)^\bot \cl V_1\cl W_1 \chi _1(P) = \chi _1(P)^\bot \cl V_1P^\bot \cl W_1 \chi _1(P) =0.$$ Thus 
$\chi _1(P)\in \Lat{ [\cl V_1\cl W_1] }.$
We have proved that $$\chi _1( \Lat {[\cl W_1\cl V_1]} )\subset \Lat{[\cl V_1 \cl W_1]}.$$ Similarly we can prove 
$$\psi  _1( \Lat {[\cl V_1\cl W_1]} )\subset \Lat{[\cl W_1 \cl V_1]}$$ which implies 
 $$\chi _1(\psi  _1( \Lat {[\cl V_1\cl W_1]} ))\subset \chi _1(\Lat{[\cl W_1 \cl V_1]})$$
Suppose that $\zeta =\mathrm{Map}(   [\cl V_1\cl W_1]   ).$ We can easily check that $\zeta =\chi_1\circ  \psi_1 $ 
and since $[\cl V_1\cl W_1]   $ is unital algebra $\zeta (Q)=Q, \;\forall \;Q\; \in \; \Lat {[\cl V_1\cl W_1]} . $
Thus $\chi_1( \psi_1(Q))=Q, \;\forall \;Q\; \in \; \Lat {[\cl V_1\cl W_1]} $ 
which implies $$ \Lat {[\cl W_1\cl V_1]} \subset \chi _1(\Lat {[\cl W_1\cl V_1]} ).$$ 
The conclusion is that  $$ \Lat {[\cl W_1\cl V_1]} =\chi _1(\Lat {[\cl W_1\cl V_1]} ).$$ 
The other proofs are similar. 
\end{proof}

\begin{lemma}\label{33} 

 Let $(P_i)_i\subset \Lat{[\cl W_1\cl V_1]}.$ Then $\wedge_i\chi _1(P_i) =\chi _1(\wedge_iP_i). $
\end{lemma}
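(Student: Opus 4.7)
The plan is to exploit the bijective correspondence between $\Lat{[\cl W_1\cl V_1]}$ and $\Lat{[\cl V_1\cl W_1]}$ established in Lemma \ref{32}: the maps $\chi_1$ and $\psi_1$ restrict to mutually inverse order-preserving bijections between these two lattices (the inversion on both sides follows, as in the proof of Lemma \ref{32}, from the fact that for any unital weak* closed algebra $\cl A$, $\Map{\cl A}(P)=P$ for every $P\in\Lat\cl A$). Since $\chi_1$ is sup-preserving by definition and hence order-preserving, the inequality
$$\chi_1(\wedge_i P_i)\le \wedge_i\chi_1(P_i)$$
is immediate; the content of the lemma is the reverse inequality.

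For the nontrivial direction, set $Q=\wedge_i\chi_1(P_i)$. Each $\chi_1(P_i)$ lies in $\Lat{[\cl V_1\cl W_1]}$ by Lemma \ref{32}, and since $\Lat{[\cl V_1\cl W_1]}$ is the projection lattice of an algebra it is closed under arbitrary meets, so $Q\in\Lat{[\cl V_1\cl W_1]}$. Now I would apply $\psi_1$: since $\psi_1$ is order-preserving and $\psi_1\circ\chi_1=\mathrm{id}$ on $\Lat{[\cl W_1\cl V_1]}$, we get
$$\psi_1(Q)\le \psi_1(\chi_1(P_i))=P_i\qquad\text{for every }i,$$
and therefore $\psi_1(Q)\le \wedge_i P_i$.

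Finally I would apply $\chi_1$ to both sides. Using again that $\chi_1$ is order-preserving and that $\chi_1\circ\psi_1=\mathrm{id}$ on $\Lat{[\cl V_1\cl W_1]}$, we obtain
$$Q=\chi_1(\psi_1(Q))\le \chi_1(\wedge_i P_i),$$
which combined with the easy inequality yields the desired equality.

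The only step that requires a bit of care is the verification that $\chi_1$ and $\psi_1$ actually act as mutual inverses on the relevant sublattices; but this is essentially contained in the proof of Lemma \ref{32}, the key input being that both $[\cl W_1\cl V_1]$ and $[\cl V_1\cl W_1]$ are \emph{unital} algebras (condition (ii) of Definition \ref{13}). No further computation with the bimodules $\cl U_1,\cl U_2$ themselves is needed for this lemma.
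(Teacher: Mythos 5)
Your proof is correct and follows essentially the same route as the paper's: both reduce the problem to the fact (from the proof of Lemma \ref{32}) that $\chi_1$ and $\psi_1$ restrict to mutually inverse order-preserving bijections between $\Lat{[\cl W_1\cl V_1]}$ and $\Lat{[\cl V_1\cl W_1]}$, note that $\wedge_i\chi_1(P_i)$ stays in $\Lat{[\cl V_1\cl W_1]}$, and pull it back through $\psi_1$. The paper phrases the pullback as choosing a preimage $P$ with $\chi_1(P)=\wedge_i\chi_1(P_i)$ rather than writing $\psi_1(Q)$ directly, but this is the same argument.
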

\begin{proof}  Let $(P_i)_i\subset \Lat{[\cl W_1\cl V_1]}.$ By Lemma \ref{32} we have $(\chi _1(P_i))_i\subset \Lat{[\cl V_1\cl W_1]}.$ 
So $ \wedge _i\chi _1(P_i) \in \Lat{[\cl V_1\cl W_1]}.$ Thus there exists $P\in \Lat{[\cl W_1\cl V_1]}$ such that 
$ \wedge _i\chi _1(P_i) =\chi _1(P).$ Now we have
$$P=\psi_1 (\chi_1(P))=\psi _1( \wedge _i\chi _1(P_i) ) \leq \wedge _i \psi_1( \chi_1(  P_i)) =\wedge _iP_i.$$
It follows that $$\chi _1(P)\leq \chi _1(\wedge _iP_i)\Rightarrow \wedge _i\chi _1(P_i) \leq  \chi _1(\wedge _iP_i) \Rightarrow 
\wedge _i\chi _1(P_i) =  \chi _1(\wedge _iP_i). $$ 
\end{proof}

\begin{lemma}\label{34} If $(P_i)_i\subset \cl S^1_l$  and $(Q_i)_i\subset \cl S^1_r$, then 
  $$\wedge_i\chi _1(P_i) =\chi _1(\wedge_iP_i), \;\;\; \wedge_i\psi^*  _2(Q_i) =\psi  _2^*(\wedge_iQ_i) .$$

\end{lemma}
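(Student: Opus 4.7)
The plan is to reduce the first identity to Lemma \ref{33} and to obtain the second identity by passing to the adjoint Morita equivalence. Lemma \ref{33} applies to projections in $\Lat{[\cl W_1\cl V_1]}$, so the key step is to verify $\cl S^1_l \subset \Lat{[\cl W_1\cl V_1]}$. Since $[\cl W_1\cl V_1]$ is a unital algebra containing $\cl D_{H_1}$, $\Lat{[\cl W_1\cl V_1]}$ is a CSL; and since, by Theorem \ref{23}, $\cl S^1_l$ is the CSL generated by $\cl S_{1,\phi_1}$, it suffices to prove the membership $\cl S_{1,\phi_1}\subset\Lat{[\cl W_1\cl V_1]}$.

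The crucial technical input is the invariance
\begin{equation}\label{inv} \cl U_1\,[\cl W_1\cl V_1] \subset \cl U_1. \end{equation}
To prove \eqref{inv}, fix $U\in\cl U_1$ and $T\in[\cl W_1\cl V_1]$. Using $I_{H_2}\in[\cl W_2\cl V_2]$, choose a net $A_\alpha \in \mathrm{span}(\cl W_2\cl V_2)$ with $A_\alpha\to I_{H_2}$ weak*, and similarly approximate $T$ weak* by finite sums of elements of $\cl W_1\cl V_1$. The Morita identity $\cl V_2\cl U_1\cl W_1\subset\cl U_2$ shows that each monomial $W_2 V_2 U W_1 V_1$ arising in the product lies in $\cl W_2\cl U_2\cl V_1\subset\cl U_1$. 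Passing to the two weak* limits and using the weak*-closedness of $\cl U_1$ gives $UT\in\cl U_1$. Consequently, for $P=\phi_1^*(Q)^\bot\in\cl S_{1,\phi_1}$ one has $PH_1=\{\xi : Q\cl U_1\xi=0\}$; for $\xi\in PH_1$ and $T\in[\cl W_1\cl V_1]$, \eqref{inv} gives $Q U T\xi=Q(UT)\xi=0$ for every $U\in\cl U_1$, hence $T\xi\in PH_1$, i.e.\ $P\in\Lat{[\cl W_1\cl V_1]}$. Lemma \ref{33} now yields the first identity $\wedge_i\chi_1(P_i)=\chi_1(\wedge_i P_i)$.

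For the second identity, apply the first identity to the adjoint Morita equivalence. Taking adjoints of condition $(i)$ in Definition \ref{13}, $\cl U_1^*$ and $\cl U_2^*$ are spatially Morita equivalent, with $\cl V_1$ replaced by $\cl W_2^*$ and $\cl W_2$ by $\cl V_1^*$ (and similarly for $\cl V_2,\cl W_1$). Under this equivalence, the essential bilattice of $\cl U_1^*$ has left-lattice equal to $\cl S^1_r$, and the role of $\chi_1$ is played by $\Map{\cl W_2^*}=\psi_2^*$. The first identity, applied to the adjoint setup, therefore yields $\wedge_i\psi_2^*(Q_i)=\psi_2^*(\wedge_i Q_i)$ for $(Q_i)\subset\cl S^1_r$.

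The main obstacle is the verification of \eqref{inv}: one must interleave two weak* approximations (of units living on different Hilbert spaces) and insert the Morita identity $\cl V_2\cl U_1\cl W_1\subset\cl U_2$ at the correct intermediate stage to return to $\cl U_1$.
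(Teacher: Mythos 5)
Your proof is correct and takes essentially the same route as the paper: reduce the first identity to Lemma \ref{33} by showing $\cl S_{1,\phi_1}\subset\Lat{[\cl W_1\cl V_1]}$, which both arguments extract from the module invariance $\cl U_1[\cl W_1\cl V_1]\subset\cl U_1$ (the paper states this as $\cl U_1=\cl U_1[\cl W_1\cl V_1]$ and passes to adjoints, whereas you verify the lattice membership directly on vectors and spell out the weak* approximation the paper leaves implicit). The second identity is handled in both cases by the symmetric (adjoint) argument.
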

\begin{proof} Let  $(P_i)_i\subset \cl S^1_l$.  We shall prove that $\wedge_i\chi _1(P_i) =\chi _1(\wedge_iP_i). $ 
 By Lemma \ref{33}, it suffices to prove that $$\cl S_{1,\phi _1}\subset \Lat{[\cl W_1\cl V_1]}.$$ 
Since $$\cl U_1=\cl U_1[ \cl W_1 \cl V_1 ]$$ we have 
$$\cl U_1^*= [\cl W_1 \cl V_1 ]^* \cl U_1^*. $$ 
It follows from the definition of $\cl S_{2, \phi _1^*}$ that 
$\cl S_{2, \phi _1^*}\subset \Lat{ [\cl W_1 \cl V_1 ]^*}$ and hence $$\cl S_{1, \phi _1} \subset \Lat{[\cl W_1\cl V_1]}.$$
The other proof is similar.\end{proof}

\begin{lemma}\label{35} 

$(i)$  $$\cl S^2=\{(\chi_1(P) , \psi_2^*(Q )): \;\;(P,Q)\;\in \;\cl S^1\}$$

$(ii)$ $$\cl S^1=\{(\psi _1(P) , \chi _2^*(Q )): \;\;(P,Q)\;\in \;\cl S^2\}$$

\end{lemma}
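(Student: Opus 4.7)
The plan is to define $\Phi \colon \cl S^1 \to \cl P(B(K_1)) \times \cl P(B(K_2))$ by $\Phi(P, Q) = (\chi_1(P), \psi_2^*(Q))$ and, symmetrically, $\Psi \colon \cl S^2 \to \cl P(B(H_1)) \times \cl P(B(H_2))$ by $\Psi(P', Q') = (\psi_1(P'), \chi_2^*(Q'))$, and to show that they are mutually inverse bijections $\cl S^1 \leftrightarrow \cl S^2$; this immediately yields both (i) and (ii). The workhorse throughout is the transfer identity
$$\psi_2^*(Q)\, T\, \chi_1(P) = 0 \iff Q\, \cl W_2\, T\, \cl V_1\, P = 0, \qquad T \in B(K_1, K_2),$$
valid for arbitrary projections $P, Q$, which one obtains by unwinding the definitions $\chi_1 = \Map{\cl V_1}$, $\psi_2^* = \Map{\cl W_2^*}$ and using that $\chi_1(P)K_1 = \overline{\mathrm{span}}(\cl V_1 P H_1)$ and $\psi_2^*(Q)K_2 = \overline{\mathrm{span}}(\cl W_2^* Q H_2)$.

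The core step is to prove that $\Phi(\cl S^1)$ is a strongly closed commutative bilattice with $\mathfrak M(\Phi(\cl S^1)) = \cl U_2$; Theorem \ref{23}(ii) then forces $\cl S^2 \subset \Phi(\cl S^1)$. The bilattice structure rests on the following: $\chi_1, \psi_2^*$ always preserve arbitrary suprema and, by Lemma \ref{34}, preserve arbitrary infima on $\cl S^1_l$ and $\cl S^1_r$; the images commute because $\cl V_1, \cl W_2$ are masa bimodules; the unital hypotheses $I \in [\cl V_1 \cl W_1]$ and $I \in [\cl V_2 \cl W_2]$ force $\chi_1(I) = I$ and $\psi_2^*(I) = I$ by density; strong closure then follows from Theorem \ref{21}. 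For $\mathfrak M(\Phi(\cl S^1)) = \cl U_2$, the forward inclusion is the transfer identity applied to each $(P,Q) \in \cl S^1$ together with $\cl W_2 \cl U_2 \cl V_1 \subset \cl U_1 = \mathfrak M(\cl S^1)$. For the reverse, any $T \in \mathfrak M(\Phi(\cl S^1))$ satisfies $\cl W_2 T \cl V_1 \subset \mathfrak M(\cl S^1) = \cl U_1$ by the transfer identity again, whence
$$T = I_{K_2}\, T\, I_{K_1} \in [\cl V_2 \cl W_2]\, T\, [\cl V_1 \cl W_1] \subset [\cl V_2 \cl U_1 \cl W_1] = \cl U_2.$$

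By the symmetric construction (swapping $\cl U_1 \leftrightarrow \cl U_2$ and $\cl V_i \leftrightarrow \cl W_i$) one obtains $\cl S^1 \subset \Psi(\cl S^2)$. To conclude, I would verify $\Psi \circ \Phi = \id_{\cl S^1}$ and $\Phi \circ \Psi = \id_{\cl S^2}$. The proof of Lemma \ref{34} already gives $\cl S^1_l \subset \Lat{[\cl W_1 \cl V_1]}$, hence $\psi_1 \circ \chi_1 = \Map{[\cl W_1 \cl V_1]}$ is the identity on $\cl S^1_l$ because $[\cl W_1 \cl V_1]$ is unital. An analogous computation, using $[\cl W_2 \cl V_2]\, \cl U_1 \subset \cl U_1$ (which one derives by expanding $\cl U_1 = [\cl W_2 \cl U_2 \cl V_1]$ and $\cl U_2 = [\cl V_2 \cl U_1 \cl W_1]$ and invoking the unital algebras), yields $\cl S^1_r \subset \Lat{[\cl W_2 \cl V_2]}^\bot$ and hence $\chi_2^* \circ \psi_2^* = \id$ on $\cl S^1_r$; symmetric identities handle $\cl S^2$. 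Chaining $\Phi(\cl S^1) \subset \Phi(\Psi(\cl S^2)) = \cl S^2 \subset \Phi(\cl S^1)$ then upgrades both earlier inclusions to equalities, proving (i) and (ii).

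The main obstacle will be the reverse inclusion $\mathfrak M(\Phi(\cl S^1)) \subset \cl U_2$: pushing a general $T$ into $\cl U_2$ via $T = I_{K_2} T I_{K_1} \in [\cl V_2 \cl W_2] T [\cl V_1 \cl W_1]$ requires all four unital algebra conditions of Definition \ref{13}(ii) together with the Morita expansions of both $\cl U_1$ and $\cl U_2$ acting in concert.
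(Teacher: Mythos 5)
Your proposal is correct and follows essentially the same route as the paper: define $\cl L_2=\{(\chi_1(P),\psi_2^*(Q)):(P,Q)\in\cl S^1\}$, show via the transfer identity $\psi_2^*(Q)T\chi_1(P)=0\Leftrightarrow Q\cl W_2T\cl V_1P=0$ and the unital algebra conditions that $\cl L_2$ is a strongly closed commutative bilattice with $\mathfrak M(\cl L_2)=\cl U_2$, invoke Theorem \ref{23}(ii) to get $\cl S^2\subset\cl L_2$, and close the loop with the symmetric inclusion and the identities $\psi_1\circ\chi_1=\id$ on $\cl S^1_l$, $\chi_2^*\circ\psi_2^*=\id$ on $\cl S^1_r$. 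Your explicit verification that $\chi_1(I)=I$ and $\psi_2^*(I)=I$ (needed for $(I,0),(0,I)\in\cl L_2$) is a detail the paper leaves implicit.
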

\begin{proof} By Theorem \ref{21},  Lemma \ref{34}, and the fact that $\chi _1, \psi _2^*$ are $\vee$-continuous, 
the set $$\cl L_2=\{(\chi_1(P) , \psi_2^*(Q ): \;\;(P,Q)\;\in \;\cl S^1\}$$ is 
a strongly closed commutative bilattice. We need to show that $\cl S^2=\cl L_2.$ If $(P,Q)\in \cl S^1$, then 
as $\cl W_2\cl U_2 \cl V_1 \subset \cl U_1$,  
$$Q\cl U_1P=0\Rightarrow Q\cl W_2\cl U_ 2 \cl V_1P=0\Rightarrow \psi_2^*(Q) \cl U_2\chi_1(P) =0.$$ So 
$\cl U_2\subset \mathfrak M (\cl L_2 ) .$ When $X\in \mathfrak M(\cl L_2 ) $ and $(P,Q)\in \cl S^1$, we have 
$$\psi_2^*(Q) X\chi_1(P)=0\Rightarrow Q \cl W_2X\cl V_1 P=0\Rightarrow  \cl W_2X\cl V_1 \subset \cl U_1.$$
So  $$ [\cl V_2\cl W_2] X[\cl V_1\cl W_1]\subset \cl V_2\cl U_1\cl W_1\subset \cl U_2.$$ 
Since $[\cl V_2\cl W_2] $ and $[\cl V_1\cl W_1] $  are unital algebras, $X\in \cl U_2.$ We have proved that 
$\cl U_2=\mathfrak M(\cl L_2 ) .$ Theorem \ref{23} implies that $\cl S^2\subset \cl L_2.$ We have that 
\begin{equation}\label{ex1} \chi_1 \oplus \psi_2^*(\cl S^1)=\cl L_2\supset \cl S^2 
\end{equation}
and similarly 
\begin{equation}\label{ex2}\psi _1 \oplus \chi _2^*(\cl S^2)=\cl L_1\supset \cl S^1. \end{equation}
If $P\in S_l^1$, then $P\in \Lat{[\cl W_1\cl V_1]}$ and so $\psi_1( \chi_1(P))=P, $ (see the proof of Lemma \ref{32}).
Similarly if $Q\in \cl S_r^1$, then $\chi_2^*( \psi_2^*(Q)) =Q.$ So 
\begin{equation}\label{ex3} \cl L_1=(\psi_1 \oplus \chi_2^* )(\cl S^2)\subset (\psi_1 \oplus \chi_2^* )
(\chi_1 \oplus \psi_2^* )(\cl S^1)=\cl S^1.
\end{equation}
Similarly
\begin{equation}\label{ex4}
\cl L_2= (\chi _1 \oplus \psi_2^* ) (\cl S^1)\subset(\chi _1 \oplus \psi_2^* )  (\psi_1 \oplus \chi_2^* )(\cl S^2)=\cl S^2.\end{equation}

Equations (\ref{ex1}), (\ref{ex2}), (\ref{ex3}), and (\ref{ex4}) imply that $\cl L_1=\cl S^1, \cl L_2=\cl S^2.$

\end{proof}

\begin{theorem}\label{36} Two reflexive masa bimodules are spatially Morita equivalent if and only if their 
essential bilattices are isomorphic.
\end{theorem}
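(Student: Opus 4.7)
The plan is to handle the two implications separately. The forward direction is essentially free: if the essential bilattices $\cl S^1, \cl S^2$ of $\cl U_1, \cl U_2$ are isomorphic then, since $\cl U_i=\mathfrak{M}(\cl S^i)$ by Theorem~\ref{23}(i), Theorem~\ref{31} immediately gives that $\cl U_1, \cl U_2$ are spatially Morita equivalent.

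For the converse, I would assume that $\cl U_1$ and $\cl U_2$ are spatially Morita equivalent via the spaces $\cl V_1, \cl V_2, \cl W_1, \cl W_2$ of Definition~\ref{13}, and let $\cl S^1, \cl S^2$ be their essential bilattices. With $\chi_i=\Map{\cl V_i}$ and $\psi_i=\Map{\cl W_i}$, the candidate isomorphism is
$$f=\chi_1\oplus\psi_2^*:\cl S^1\to\cl S^2,\qquad (P,Q)\mapsto(\chi_1(P),\psi_2^*(Q)).$$
Lemma~\ref{35}(i) states precisely that $\cl S^2=f(\cl S^1)$, so $f$ is well defined and onto; in particular $\chi_1(\cl S^1_l)=\cl S^2_l$ and $\psi_2^*(\cl S^1_r)=\cl S^2_r$.

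Next I would verify that the two components of $f$ are injective. In the proof of Lemma~\ref{34} it is shown that $\cl S_{1,\phi_1}\subset\Lat{[\cl W_1\cl V_1]}$; since the latter is a CSL, the CSL $\cl S^1_l$ generated by $\cl S_{1,\phi_1}$ is contained in $\Lat{[\cl W_1\cl V_1]}$ as well, and hence Lemma~\ref{32} yields $\psi_1(\chi_1(P))=P$ for every $P\in\cl S^1_l$. An entirely symmetric argument, working with $\cl U_1^*$ and the algebra $[\cl V_2\cl W_2]$, gives $\chi_2^*(\psi_2^*(Q))=Q$ for every $Q\in\cl S^1_r$. So $f$ is a bijection between $\cl S^1$ and $\cl S^2$.

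Finally, to conclude that $f$ is a bilattice isomorphism in the sense of Definition~\ref{22}, it remains to check that $\chi_1:\cl S^1_l\to\cl S^2_l$ and $\psi_2^*:\cl S^1_r\to\cl S^2_r$ are CSL homomorphisms. Preservation of arbitrary suprema is automatic since any map of the form $\Map{\cdot}$ is sup preserving, and preservation of arbitrary infima on $\cl S^1_l$ and $\cl S^1_r$ is exactly the content of Lemma~\ref{34}. The compatibility condition $(\chi_1(P),\psi_2^*(Q))\in\cl S^2$ for $(P,Q)\in\cl S^1$ is built into Lemma~\ref{35}(i). The main obstacle in the whole argument is really the identification of the essential bilattices via the four maps $\chi_i,\psi_i$, but this has already been absorbed into Lemmas~\ref{32}--\ref{35}, so the theorem reduces to repackaging them.
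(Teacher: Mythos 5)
Your proposal is correct and follows essentially the same route as the paper: the forward implication is Theorem \ref{31} applied to the essential bilattices (using Theorem \ref{23}(i)), and the converse packages Lemma \ref{35} together with the identities $\psi_1(\chi_1(P))=P$ on $\cl S^1_l$ and $\chi_2^*(\psi_2^*(Q))=Q$ on $\cl S^1_r$ into the isomorphism $\chi_1\oplus\psi_2^*$. Your extra verification that the components are sup- and inf-preserving CSL homomorphisms (via Lemma \ref{34} and the sup-preservation of $\mathrm{Map}$) is a detail the paper leaves implicit but is entirely in the spirit of its argument.
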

\begin{proof} The one direction is Theorem \ref{31}. For the other we use Lemma \ref{35} and the fact that
$$\chi_1 (\psi_1(P))=P \;\;\forall P\;\in \;\cl S_l^2 , \;\;\;\psi_1 (\chi_1(P))=P \;\;\forall \;P\;\in \cl S_l^1, $$
$$\chi_2^* (\psi_2^*(Q))=Q \;\;\forall Q\;\in \;\cl S_r^1 , \;\;\;\psi_2^*( \chi_2^*(Q))=Q \;\;\forall \;Q\;\in \cl S_r^2. $$
\end{proof}

\begin{corollary}Spatial Morita equivalence of reflexive masa bimodules is an equivalence relation.
\end{corollary}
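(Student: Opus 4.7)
The plan is to reduce the corollary to the elementary fact that isomorphism of strongly closed commutative bilattices, in the sense of Definition \ref{22}, is itself an equivalence relation. The bridge is Theorem \ref{36}, which asserts that two reflexive masa bimodules are spatially Morita equivalent if and only if their essential bilattices (Definition \ref{24}) are isomorphic. So the strategy is to verify reflexivity, symmetry, and transitivity at the level of bilattice isomorphism and then invoke Theorem \ref{36} in both directions.

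For reflexivity, given a reflexive masa bimodule $\cl U$ with essential bilattice $\cl S$, I would take $\phi = \mathrm{id}_{\cl S_l}$ and $\psi = \mathrm{id}_{\cl S_r}$; these are CSL isomorphisms because the identity preserves arbitrary suprema and infima and is bijective, and the compatibility condition $(\phi(P),\psi(Q)) = (P,Q) \in \cl S$ is tautological. For symmetry, given a bilattice isomorphism $\phi \oplus \psi : \cl S^1 \to \cl S^2$, I would show that $\phi^{-1} \oplus \psi^{-1} : \cl S^2 \to \cl S^1$ is again a bilattice isomorphism: a bijective sup- and inf-preserving map between complete lattices automatically has a sup- and inf-preserving inverse, and the compatibility condition carries over by writing each $(P',Q') \in \cl S^2$ as $(\phi(P),\psi(Q))$ for a unique $(P,Q) \in \cl S^1$. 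For transitivity, given bilattice isomorphisms $\cl S^1 \to \cl S^2$ and $\cl S^2 \to \cl S^3$, the componentwise compositions are CSL isomorphisms and satisfy the compatibility condition because each factor does.

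I expect no genuine obstacle; all the substantive work is done by Theorem \ref{36}, and this corollary is a purely formal consequence of the observation that isomorphism of algebraic structures is always an equivalence relation. The only minor point to be careful about is that Definition \ref{22} requires \emph{arbitrary} (not merely finite) sup- and inf-preservation, but this is preserved under identities, bijective inverses, and compositions, so nothing is lost.
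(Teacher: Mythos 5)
Your argument is correct and is exactly the one the paper intends: the corollary is stated with no proof because it is an immediate consequence of Theorem \ref{36} together with the (routine) fact that bilattice isomorphism is reflexive, symmetric, and transitive, which is what you spell out. The only point worth noting is that your symmetry step implicitly reads Definition \ref{22} as saying that $\phi\oplus\psi$ carries $\cl S^1$ \emph{onto} $\cl S^2$ (not merely that $\phi,\psi$ are bijections of the component lattices); this is the reading the paper itself uses, e.g.\ when it treats the induced map $\theta$ in the proof of Theorem \ref{31} as a CSL isomorphism, so your proof matches the intended one.
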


\begin{proposition}\label{37} Two CSL algebras are spatially Morita equivalent as algebras iff they are spatially Morita equivalent 
as spaces.
\end{proposition}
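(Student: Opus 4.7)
The plan has two halves. One direction is immediate: CSL algebras are unital weak* closed algebras containing masas, so the first remark in the Examples shows that spatial Morita equivalence as algebras implies spatial Morita equivalence as spaces.

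For the converse, I would start from CSL algebras $\cl A_i=\Alg{\cl L_i}$, $i=1,2$, which are spatially Morita equivalent as spaces. Each $\cl A_i$ is a reflexive masa bimodule, so Theorem \ref{36} furnishes a bilattice isomorphism $\phi\oplus\psi\colon \cl S^1\to \cl S^2$ of the essential bilattices of $\cl A_1$ and $\cl A_2$. The goal is to extract from $\phi\oplus\psi$ a CSL isomorphism $\cl L_1\to \cl L_2$, whereupon Theorem \ref{12} delivers spatial Morita equivalence as algebras.

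The crux, and the only real technical step, is to identify the essential bilattice of a CSL algebra $\cl A=\Alg{\cl L}$. Writing $\phi_0=\Map{\cl A}$, I would argue that $\overline{\cl A PH}$ is $\cl A$-invariant and contains $PH$ (since $I\in \cl A$), forcing $\phi_0(P)=\wedge\{L\in \cl L: L\geq P\}$ and in particular $\phi_0|_{\cl L}=\mathrm{id}$; hence $\cl S_{2,\phi_0}=\cl L$. Running the same argument for $\cl A^*$, whose reflexive lattice is $\cl L^\bot$, gives $\cl S_{2,\phi_0^*}=\cl L^\bot$ and therefore $\cl S_{1,\phi_0}=\cl L$. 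Consequently, in the notation of Theorem \ref{23} one has $\cl L_1=\cl L_2=\cl L$, and the essential bilattice of $\cl A$ is
$$\cl S=\{(P,Q)\in \cl L\times \cl L^\bot : P\leq Q^\bot\},$$
with $\cl S_l=\cl L$ and $\cl S_r=\cl L^\bot$.

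With this computation in place, recovering a CSL isomorphism from $\phi\oplus\psi$ is formal. Both $\phi\colon \cl L_1\to \cl L_2$ and $\psi\colon \cl L_1^\bot\to \cl L_2^\bot$ are CSL isomorphisms by definition, and the bilattice compatibility couples them: for each $L\in \cl L_1$ the pair $(L,L^\bot)$ lies in $\cl S^1$ (trivially $L\leq L$), so $\phi(L)\leq\psi(L^\bot)^\bot$; applying the same reasoning to the inverse $\phi^{-1}\oplus\psi^{-1}$ produces the reverse inequality and hence $\psi(L^\bot)=\phi(L)^\bot$. Thus $\phi$ by itself is a CSL isomorphism $\cl L_1\to \cl L_2$, and Theorem \ref{12} completes the argument.
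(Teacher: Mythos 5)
Your proof is correct and follows essentially the same route as the paper: identify the essential bilattice of $\Alg{\cl L}$ as $\{(P,Q)\in\cl L\times\cl L^\bot : P\leq Q^\bot\}$ and observe that bilattice isomorphisms of such bilattices correspond to CSL isomorphisms, then invoke Theorems \ref{12} and \ref{36}. The paper leaves these two verifications as "easily seen"; you have simply supplied the details (the computation of $\Map{\Alg{\cl L}}$ and the coupling $\psi(L^\bot)=\phi(L)^\bot$).
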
\begin{proof}
Suppose that $\cl A_1=\mathrm{Alg}(\cl L_1)$ and $\cl A_2=\mathrm{Alg}(\cl L_2)$  are CSL algebras. Their 
essential bilattices are
$$\cl S^i=\{(P,Q)\in \cl L_i\times \cl L_i^\bot : \;\;P\leq I-Q\}.$$
We can easily  see that $\cl L_1, \cl L_2$ are isomorphic CSLs iff the bilattices $\cl S^1, \cl S^2$ are isomorphic.\end{proof}

\section{Continuous bilattice homomorphisms}

\begin{definition}\label{411} Let $\cl S^1, \cl S^2$ be strongly closed commutative bilattices and 
$ \cl U_1=\mathfrak M (\cl S^1) , \cl U_2=\mathfrak M (\cl S^2) .$ If $$\phi \oplus \psi : 
\cl S^1\rightarrow \cl S^2$$ is a bilattice homomorphism, we define the spaces
$$\cl V_1=\{T: \phi (P)^\bot TP=0\;\;\forall \;\;P\in \cl S_l^1\}$$
$$\cl V_2=\{T: \psi  (Q) TQ^\bot =0\;\;\forall \;\;Q\in \cl S_r^1\}$$
$$\cl W_1=\{T: P^\bot T\phi (P)=0\;\;\forall \;\;P\in \cl S_l^1\}$$
$$\cl W_2=\{T: Q T\psi (Q)^\bot =0\;\;\forall \;\;Q\in \cl S_r^1\}.$$
\end{definition}

We isolate from the proof of Theorem \ref{31} the following lemma.

\begin{lemma}\label{41} Let $\phi, \psi, \cl U_1, \cl U_2, \cl V_1, \cl V_2, \cl W_1, \cl W_2 $ be as in the above definition.  
 If $\phi \oplus \psi $ is an isomorphism, then
$$ \cl U_1=[\cl W_2\cl U_2 \cl V_1] , \;\;\;\cl U_2=[\cl V_2\cl U_1 \cl W_1] $$
$$ [\cl W_1\cl V_1]=\Alg {\cl S_l^1} ,\;\;\;[\cl V_1\cl W_1]=\Alg {\cl S_l^2} $$
$$ [\cl W_2\cl V_2]=\Alg {\cl S_r^1}^* ,\;\;\;[\cl V_2\cl W_2]=\Alg {\cl S_r^2}^*. $$
\end{lemma}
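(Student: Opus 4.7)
The plan is to observe that every assertion of the lemma is already established, essentially explicitly, inside the proof of Theorem~\ref{31}; what needs to be done is to identify, for each displayed equality, the precise step of that proof which produces it, and to record these together.

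As in Theorem~\ref{31}, I would form the CSLs $\cl L_i=\{P\oplus Q^\bot:(P,Q)\in\cl S^i\}$ acting on $H_1\oplus H_2$ and $K_1\oplus K_2$, respectively, together with the CSL isomorphism
$$\theta:\cl L_1\to\cl L_2,\qquad P\oplus Q^\bot\longmapsto \phi(P)\oplus\psi(Q)^\bot,$$
which is well-defined and bijective precisely because $\phi\oplus\psi$ is a bilattice isomorphism. Theorem~\ref{12} then gives
$$\Alg{\cl L_1}=[Op(\theta^{-1})Op(\theta)],\qquad \Alg{\cl L_2}=[Op(\theta)Op(\theta^{-1})].$$

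The next step is to compute all four relevant spaces in $2\times 2$ block-matrix form. A direct unpacking of the conditions $\theta(L)^\bot T L=0$ and $\theta^{-1}(L')^\bot T L'=0$ (the off-diagonal condition with $(P,Q)=(0,0)$ forcing the upper-right block to vanish) yields the lower-triangular descriptions
$$\Alg{\cl L_i}=\begin{pmatrix}\Alg{\cl S_l^i}&0\\ \cl U_i&\Alg{\cl S_r^i}^*\end{pmatrix},\quad Op(\theta)=\begin{pmatrix}\cl V_1&0\\ \mathfrak M(\cl Z^1)&\cl V_2\end{pmatrix},\quad Op(\theta^{-1})=\begin{pmatrix}\cl W_1&0\\ \mathfrak M(\cl Z^2)&\cl W_2\end{pmatrix},$$
with $\cl Z^1=\{(P,\psi(Q)):(P,Q)\in\cl S^1\}$ and $\cl Z^2=\{(\phi(P),Q):(P,Q)\in\cl S^1\}$. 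Performing the two block products and comparing entries with these forms delivers at once the four diagonal identities
$$[\cl W_1\cl V_1]=\Alg{\cl S_l^1},\;[\cl V_1\cl W_1]=\Alg{\cl S_l^2},\;[\cl W_2\cl V_2]=\Alg{\cl S_r^1}^*,\;[\cl V_2\cl W_2]=\Alg{\cl S_r^2}^*,$$
together with the off-diagonal identities $\cl U_1=[\mathfrak M(\cl Z^2)\cl V_1+\cl W_2\mathfrak M(\cl Z^1)]$ and the symmetric one for $\cl U_2$.

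To finish, I would convert the off-diagonal identity to $\cl U_1=[\cl W_2\cl U_2\cl V_1]$ by establishing $\mathfrak M(\cl Z^2)=[\cl W_2\cl U_2]$ and $\mathfrak M(\cl Z^1)=[\cl U_2\cl V_1]$, exactly as in the proof of Theorem~\ref{31}. The inclusion $\cl W_2\cl U_2\subset\mathfrak M(\cl Z^2)$ follows from $Q\cl W_2=Q\cl W_2\psi(Q)$ together with the bilattice-homomorphism property $\psi(Q)\cl U_2\phi(P)=0$ for $(P,Q)\in\cl S^1$; the reverse inclusion uses the already-established unital algebra identity $[\cl W_2\cl V_2]=\Alg{\cl S_r^1}^*$ together with $\cl V_2\mathfrak M(\cl Z^2)\subset\cl U_2$ (a symmetric computation). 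Substitution then collapses the two summands in the off-diagonal identity into $\cl U_1=[\cl W_2\cl U_2\cl V_1]$, and the identity for $\cl U_2$ is proved symmetrically. The only delicate point throughout is verifying the block-matrix forms of $Op(\theta)$ and $Op(\theta^{-1})$; everything else is algebraic bookkeeping extracted from the proof of Theorem~\ref{31}.
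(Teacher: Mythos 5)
Your proposal is correct and follows exactly the paper's intended argument: the paper gives no separate proof of Lemma \ref{41}, stating only that it is ``isolated from the proof of Theorem \ref{31}'', and your reconstruction --- forming $\cl L_i$, $\theta$, applying Theorem \ref{12}, computing the block forms of $Op(\theta)$ and $Op(\theta^{-1})$ via $\cl Z^1,\cl Z^2$, and then collapsing $\cl U_1=[\mathfrak M(\cl Z^2)\cl V_1+\cl W_2\mathfrak M(\cl Z^1)]$ using $\mathfrak M(\cl Z^2)=[\cl W_2\cl U_2]$ and $\mathfrak M(\cl Z^1)=[\cl U_2\cl V_1]$ --- is precisely that proof.
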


\begin{theorem}\label{42} Let $\cl S^1, \cl S^2$ be strongly closed commutative bilattices and suppose that $$\phi \oplus \psi : 
\cl S^1\rightarrow \cl S^2$$ is a continuous onto bilattice homomorphism. We recall 
 the spaces $$\cl U_1, \cl U_2, 
\cl V_1, \cl V_2, \cl W_1, \cl W_2 $$ from Definition \ref{411}. Then 
$$ \cl U_1\supset \cl W_2\cl U_2 \cl V_1 , \;\;\; \cl U_2=[\cl V_2\cl U_1 \cl W_1] $$
$$ \cl W_1\cl V_1\subset \Alg {\cl S_l^1} ,\;\;\;[\cl V_1\cl W_1]=\Alg {\cl S_l^2} $$
$$ \cl W_2\cl V_2\subset \Alg {\cl S_r^1}^* ,\;\;\;[\cl V_2\cl W_2]=\Alg {\cl S_r^2}^*. $$

\end{theorem}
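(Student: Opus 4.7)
The plan is to mimic the proof of Lemma~\ref{41} and lose equalities only where the surjection $\phi\oplus\psi$ fails to be injective. The inclusions $\cl U_1\supset\cl W_2\cl U_2\cl V_1$, $\cl W_1\cl V_1\subset\Alg{\cl S_l^1}$, $\cl W_2\cl V_2\subset\Alg{\cl S_r^1}^*$ and their companions $\cl V_2\cl U_1\cl W_1\subset\cl U_2$, $\cl V_1\cl W_1\subset\Alg{\cl S_l^2}$, $\cl V_2\cl W_2\subset\Alg{\cl S_r^2}^*$ are verified by the same direct product chases as in Lemma~\ref{41}: for $(P,Q)\in\cl S^1$ one has $(\phi(P),\psi(Q))\in\cl S^2$, and inserting the identities $\phi(P)+\phi(P)^\perp$ and $\psi(Q)+\psi(Q)^\perp$ between successive factors of each triple product forces the outcome to vanish. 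Surjectivity of $\phi,\psi$ is used on the $\cl S^2$-side to ensure every constraint indexed by $\cl S^2$ arises from one indexed by $\cl S^1$.

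For the three equalities $\cl U_2=[\cl V_2\cl U_1\cl W_1]$, $[\cl V_1\cl W_1]=\Alg{\cl S_l^2}$, $[\cl V_2\cl W_2]=\Alg{\cl S_r^2}^*$, I would reduce to the isomorphism case of Lemma~\ref{41} by building canonical sections of $\phi$ and $\psi$. Set $\sigma_l\colon\cl S_l^2\to\cl S_l^1$, $\sigma_l(P')=\wedge\phi^{-1}(P')$; exploiting that $\phi$ is continuous and preserves arbitrary infima, $\sigma_l$ is injective, preserves arbitrary suprema and infima, and satisfies $\phi\circ\sigma_l=\id$. Define $\sigma_r$ analogously. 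The images $\cl L_l'=\sigma_l(\cl S_l^2)$ and $\cl L_r'=\sigma_r(\cl S_r^2)$ are sub-lattices of $\cl S_l^1,\cl S_r^1$ on which $\phi$ and $\psi$ are isomorphisms. The crucial identification is $\cl W_i=\cl W_i'$, where $\cl W_i'$ is defined using only $\cl L_l'$ (respectively $\cl L_r'$): given $P\in\cl S_l^1$, with $P_0=\sigma_l(\phi(P))\le P$ and $\phi(P_0)=\phi(P)$, the $\cl W_1'$-relation at $P_0$ already forces $T\phi(P)H_1\subset P_0H_1\subset PH_1$, which is the $\cl W_1$-relation at $P$; and analogously for $\cl W_2$.

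Applying Lemma~\ref{41} to the bilattice isomorphism obtained by restricting $\phi\oplus\psi$ to the sub-bilattice indexed by $\cl L_l'\times\cl L_r'$ yields $\Alg{\cl S_l^2}=[\cl V_1'\cl W_1]$, $\Alg{\cl S_r^2}^*=[\cl V_2'\cl W_2]$ and $\cl U_2=[\cl V_2'\cl U_1\cl W_1]$, where $\cl V_i'$ are defined using only $\cl L_l',\cl L_r'$. Since $\cl V_i\subset\cl V_i'$, the remaining task is to upgrade these equalities by replacing $\cl V_i'$ with $\cl V_i$. The hard part is exactly this upgrade, because the inclusion $\cl V_i\subset\cl V_i'$ is in general strict: the extra relations in $\cl V_1$ coming from $P\in\cl S_l^1\setminus\cl L_l'$ genuinely cut $\cl V_1$ down. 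My plan for overcoming it is to exploit that $\cl W_1$ has range contained in $\sigma_l(I)H_1$ — a consequence of the defining constraint of $\cl W_1$ at $P=\sigma_l(I)$ — so that the products in $\cl V_1\cl W_1$ depend only on the compression of $\cl V_1$ to $\sigma_l(I)H_1$; then combine the $\Alg{\cl S_l^2}$-bimodule structure of $[\cl V_1\cl W_1]\subset\Alg{\cl S_l^2}$ with a rank-one density argument (producing enough elementary tensors $\eta\otimes\xi^*$ inside $\cl V_1$ with $\xi$ supported off the extra constraint projections, so that $(\cl V_1\cl W_1)$ contains all rank-one operators on $K_1$ adapted to $\cl S_l^2$) to weak*-generate the target algebra. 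The same strategy then gives $[\cl V_2\cl W_2]=\Alg{\cl S_r^2}^*$, and for $\cl U_2=[\cl V_2\cl U_1\cl W_1]$ the argument is parallel with $\cl U_1$ replacing one of the block algebras.
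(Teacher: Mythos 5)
Your direct product chases give the three inclusions $\cl W_2\cl U_2\cl V_1\subset\cl U_1$, $\cl W_1\cl V_1\subset\Alg{\cl S_l^1}$, $\cl W_2\cl V_2\subset\Alg{\cl S_r^1}^*$ and the easy halves of the three equalities; that part is fine. The gap is in the hard halves of the equalities, and it is not a technicality: your reduction to Lemma \ref{41} via the sections $\sigma_l=\wedge\phi^{-1}(\cdot)$, $\sigma_r$ produces, after applying the lemma to the sub-bilattice indexed by $\cl L_l'\times\cl L_r'$, the identities $\Alg{\cl S_l^2}=[\cl V_1'\cl W_1']$ and $\cl U_2=[\cl V_2'\,\mathfrak M(\cl T^1)\,\cl W_1']$, where \emph{every} primed object is defined by the weaker system of constraints coming only from the sub-bilattice $\cl T^1$. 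So besides the upgrade $\cl V_i'\rightsquigarrow\cl V_i$ that you flag, you also need $\mathfrak M(\cl T^1)\rightsquigarrow\cl U_1$ (the restricted bilattice imposes fewer constraints, so $\mathfrak M(\cl T^1)\supset\cl U_1$, generally strictly), and this is not addressed at all. Moreover the identification $\cl W_i=\cl W_i'$ is sensitive to which section you take: the constraint $P^\bot T\phi(P)=0$ propagates from $P_0=\wedge\phi^{-1}(\phi(P))\le P$, but the constraint $QT\psi(Q)^\bot=0$ propagates only from $Q_0=\vee\psi^{-1}(\psi(Q))\ge Q$, so $\cl W_1$ needs the minimal section on $\cl S_l^1$ while $\cl W_2$ needs the maximal one on $\cl S_r^1$; you must then still verify that the chosen pair of section images forms a sub-bilattice of $\cl S^1$, which does not follow from the homomorphism hypothesis. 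Most seriously, the proposed repair by ``rank-one density'' inside $\cl V_1$ cannot work in general: $\cl V_1$ is a reflexive masa bimodule of the form $Op(\phi)$, and such spaces need not contain any rank-one, finite rank, or even compact operator — the paper itself stresses that deciding when a CSL algebra contains a nonzero compact operator is open. An argument for Theorem \ref{42} that presupposes a supply of elementary tensors in $\cl V_1$ is therefore circular with respect to the very results (Theorem \ref{46}) this theorem is meant to feed.

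The paper avoids all of this by going the opposite way: instead of shrinking $\cl S^1$ to a sub-bilattice on which $\phi\oplus\psi$ is injective, it \emph{enlarges} to the graph. One forms $\cl Z^1=\{(P\oplus P,Q\oplus Q)\}$ and $\cl Z^2=\{(P\oplus\phi(P),Q\oplus\psi(Q))\}$ on the doubled Hilbert spaces; the map $P\oplus P\mapsto P\oplus\phi(P)$ is automatically a bilattice isomorphism (injectivity is free from the first coordinate, surjectivity from the hypothesis), Lemma \ref{41} applies verbatim, and the six assertions are read off from the $2\times2$ block entries of $\hat{\cl U_i},\hat{\cl V_i},\hat{\cl W_i}$ — the asymmetry between the equalities on the $\cl S^2$ side and the mere inclusions on the $\cl S^1$ side appears exactly because the off-diagonal corners $\Omega_1,\Omega_2$ of $\hat{\cl U_2}$ are not determined by $\cl U_1,\cl U_2$ alone. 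I recommend you replace the section construction with this graph trick; as it stands the proposal does not prove the three equalities.
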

\proof We define the bilattices 
$$ \cl Z^1=\{ (P\oplus P, Q\oplus Q) : \;\;(P,Q)\in \cl S^1\}$$
$$\cl Z^2=\{ (P\oplus \phi (P), Q\oplus \psi (Q)) : \;\;(P,Q)\in \cl S^1\}.$$
We also define the masa bimodules 
$$\hat{\cl U_1}=\mathfrak M(\cl Z^1)= \left (\begin{array}{clr} \cl U_1 & \cl U_1 \\ \cl U_1 & \cl U_1 \end{array}\right) ,$$ 

$$\hat{\cl U_2}=\mathfrak M(\cl Z^2)=\left (\begin{array}{clr} \cl U_1 & \Omega _1 \\ \Omega _2 & \cl U_2
 \end{array}\right),$$ where 
$$\Omega _1=\{T: QT\phi (P)=0\;\;\forall \;(P,Q)\in \cl S^1\}, $$ 
$$\Omega _2=\{T: \psi (Q)T P=0\;\;\forall \;(P,Q)\in \cl S^1\}. $$
We define the bilattice isomorphism
$$ \hat{\phi }\oplus \hat{\psi }: \cl Z^1\rightarrow \cl Z^2: (P\oplus P, Q\oplus Q)\rightarrow  
(P\oplus \phi (P), Q\oplus \psi (Q)) .$$ 
We also define the spaces
$$\hat{\cl V_1}=\{T: \hat{\phi }(L)^\bot TL=0\;\;\forall L\in \cl Z_l^1\}=
\left (\begin{array}{clr} \Alg{S_l^1} & \Alg{S_l^1}  \\ \cl V_1 & \cl V_1 \end{array}\right) ,$$
$$\hat{\cl V_2}=\{T: \hat{\psi  }(L) TL^\bot =0\;\;\forall L\in \cl Z_r^1\}=
\left (\begin{array}{clr} \Alg{S_r^1}^* & \Alg{S_r^1}^*  \\ \cl V_2 & \cl V_2 \end{array}\right) ,$$
$$\hat{\cl W_1}=\{T: L^\bot T\hat{\phi }(L)=0\;\;\forall L\in \cl Z_l^1\}=
\left (\begin{array}{clr} \Alg{S_l^1} & \cl W_1 \\ \Alg{S_l^1}   & \cl W_1 \end{array}\right) ,$$
$$\hat{\cl W_2}=\{T: L T\hat{\psi  }(L)^\bot =0\;\;\forall L\in \cl Z_r^1\}=
\left (\begin{array}{clr} \Alg{S_r^1}^* & \cl W_2 \\ \Alg{S_r^1}^*   & \cl W_2 \end{array}\right) .$$
Since $\hat{\phi }\oplus \hat{\psi }$ is a bilattice isomorphism, from Lemma \ref{41} 
we have
 $$\hat{\cl U_2}=[\hat{\cl V_2}\hat{\cl U_1}\hat{ \cl W_1}]\Rightarrow  \cl U_2=[\cl V_2\cl U_1 \cl W_1] ,$$ 
 $$\hat{\cl U_1}=[\hat{\cl W_2}\hat{\cl U_2}\hat{ \cl V_1}]\Rightarrow  \cl U_1\supset \cl W_2\cl U_2 \cl V_1.$$
We also define the algebras
$$ \Alg{\cl Z_l^1} =\left (\begin{array}{clr} \Alg{S_l^1} & \Alg{S_l^1}  \\ \Alg{S_l^1} & \Alg{S_l^1}   \end{array}\right), $$ 
$$\Alg{\cl Z_l^2}=\left (\begin{array}{clr} \Alg{S_l^1} & \cl W_1   \\ \cl V_1  & \Alg{S_l^2}   \end{array}\right), $$ 
$$\Alg{\cl Z_r^1}=\left (\begin{array}{clr} \Alg{S_r^1} & \Alg{S_r^1}  \\ \Alg{S_r^1} & \Alg{S_r^1}   \end{array}\right), $$ 
$$\Alg{\cl Z_r^2}=\left (\begin{array}{clr} \Alg{S_r^1} & \cl V_2^*   \\ \cl W_2^*  & \Alg{S_r^2}   \end{array}\right). $$ 
By Lemma \ref{41}, we have 
$$\Alg{\cl Z_l^1} =[\hat{\cl W_1}\hat{\cl V_1}]\Rightarrow \cl W_1\cl V_1\subset \Alg{\cl S_l^1}$$
$$\Alg{\cl Z_l^2} =[\hat{\cl V_1}\hat{\cl W_1}]\Rightarrow [\cl V_1\cl W_1]= \Alg{\cl S_l^2}$$
$$\Alg{\cl Z_r^1}^* =[\hat{\cl W_2}\hat{\cl V_2}]\Rightarrow \cl W_2\cl V_2\subset \Alg{\cl S_r^1}^*$$
$$\Alg{\cl Z_r^2}^* =[\hat{\cl V_2}\hat{\cl W_2}]\Rightarrow [\cl V_2\cl W_2]= \Alg{\cl S_r^2}^*.\qquad \Box$$

\begin{lemma}\label{43} Let $\cl L_1, \cl L_2$ be CSLs acting on separable Hilbert spaces, $\theta : \cl L_1\rightarrow \cl L_2$ be a CSL 
isomorphism, and $\cl U=Op(\theta ), \cl V=Op(\theta ^{-1})$. Then 
$$ \Alg{\cl L_1}_{min} =[ \cl V_{min}\cl U_{min} ] , \;\;\;\Alg{\cl L_2}_{min}=[\cl U_{min}\cl V_{min}]. $$  
\end{lemma}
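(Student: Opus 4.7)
I plan to prove the first identity $\Alg{\cl L_1}_{min}=[\cl V_{min}\cl U_{min}]$; the second follows by symmetry, interchanging the roles of $\theta$ and $\theta^{-1}$.

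Set $\cl B:=[\cl V_{min}\cl U_{min}]$. Since $\cl U_{min}\subset\cl U$ and $\cl V_{min}\subset\cl V$, Theorem \ref{12} yields $\cl B\subset[\cl V\cl U]=\Alg{\cl L_1}$, so $\cl B$ is a weak* closed masa subbimodule of $\Alg{\cl L_1}$. A preliminary observation I would use is that $\Map{\cl U_{min}}=\Map{\cl U}$ and $\Map{\cl V_{min}}=\Map{\cl V}$, because $\mathrm{Map}(\cl X)$ depends only on $\Ref{\cl X}$ (as $T\in\Ref{\cl X}$ forces $TP\xi\in\overline{\mathrm{span}}(\cl X P\xi)$) and $\Ref{\cl U_{min}}=\Ref{\cl U}$ by definition of the minimal bimodule; in particular $\Map{\cl U_{min}}|_{\cl L_1}=\theta$ and $\Map{\cl V_{min}}|_{\cl L_2}=\theta^{-1}$.

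The proof reduces to two steps. \emph{Step A:} show $\Ref{\cl B}=\Alg{\cl L_1}$. The composition identity $\Map{\cl B}=\Map{\cl V_{min}}\circ\Map{\cl U_{min}}$, obtained by a direct computation with weak* closures, restricts on $\cl L_1$ to $\theta^{-1}\circ\theta=\id$, giving $\cl L_1\subset\Lat{\cl B}$ and hence $\Ref{\cl B}\subset\Alg{\cl L_1}$; the reverse inclusion amounts to showing that replacing $\cl V,\cl U$ by $\cl V_{min},\cl U_{min}$ does not shrink the reflexive hull of the product. \emph{Step B:} show $\cl B\subset\Alg{\cl L_1}_{min}$; combined with Step A, the minimality of $\Alg{\cl L_1}_{min}$ among weak* closed masa bimodules with reflexive hull $\Alg{\cl L_1}$ then forces $\cl B=\Alg{\cl L_1}_{min}$.

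Both steps ultimately pass through the measure-theoretic support calculus of \cite{st} (this is why the hypothesis of separability is needed): $\cl U$ and $\cl V$ correspond to the bimodules supported on the graph of $\theta$ and the graph of $\theta^{-1}$ respectively, while $\cl U_{min}$ and $\cl V_{min}$ are supported on the same graphs viewed as minimal ($\omega$-closed) subsets. The support of the product $[\cl V_{min}\cl U_{min}]$ is the set-theoretic composition of these relations, namely the diagonal, which is precisely the minimal support of $\Alg{\cl L_1}$; this simultaneously delivers Steps A and B. The chief obstacle is that the minimal-bimodule operation is defined intrinsically (via reflexive-hull invariance), so its compatibility with the composition $[\cl V\cl U]$ is not transparent from the abstract definition---passing through the support description of \cite{st}, where the relevant compositions are simply set-theoretic, is the cleanest route I see.
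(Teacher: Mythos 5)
Your overall skeleton (two inclusions, with the reverse one driven by the minimality characterization of $\Alg{\cl L_1}_{min}$) matches the paper's, but the place where you defer to the ``measure-theoretic support calculus'' is exactly where the real content lies, and support considerations cannot deliver your Step B. The support of a masa bimodule does not distinguish $\cl X_{min}$ from $\cl X$ --- both are supported on the same $\omega$-closed set, and the whole point of operator synthesis is that they may nevertheless differ. So knowing that $[\cl V_{min}\cl U_{min}]$ is supported on the support of $\Alg{\cl L_1}$ only gives $[\cl V_{min}\cl U_{min}]\subset\frak M_{max}(\cdot)=\Alg{\cl L_1}$, not the needed inclusion into $\Alg{\cl L_1}_{min}=\frak M_{min}(\cdot)$. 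What you are implicitly assuming is a multiplicativity property of the minimal-bimodule construction, $\frak M_{min}(\kappa_2)\,\frak M_{min}(\kappa_1)\subset\frak M_{min}(\kappa_2\circ\kappa_1)$, and that is precisely the nontrivial step. The paper supplies the missing ingredient: by a result of Shulman--Todorov--Turowska, $\cl U_{min}$ and $\cl V_{min}$ are the weak* closed spans of the pseudointegral operators they contain, and the product of two pseudointegral operators is again pseudointegral; hence $\cl V_{min}\cl U_{min}$ consists of (limits of) pseudointegral operators lying in $\Alg{\cl L_1}$, which forces $[\cl V_{min}\cl U_{min}]\subset\Alg{\cl L_1}_{min}$. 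Without some such structural description of the minimal bimodule, your argument has a genuine gap.

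Your Step A is also left unproved where it matters: you assert that passing to $\cl V_{min},\cl U_{min}$ ``does not shrink the reflexive hull of the product'' but give no mechanism. Here the paper's argument is elementary and you should adopt it: for $A\in\cl V$, $B\in\cl U$ and any vector $\xi$, reflexivity gives $B\xi\in\overline{\cl U_{min}\xi}$ and then $AB\xi\in\overline{\cl V_{min}B\xi}\subset\overline{\cl V_{min}\cl U_{min}\xi}$, so $\cl V\cl U\subset\Ref{\cl V_{min}\cl U_{min}}$ and hence $\Ref{[\cl V_{min}\cl U_{min}]}=\Alg{\cl L_1}$. Combined with the corrected forward inclusion, minimality then gives equality. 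Your preliminary observation that $\Map{\cl U_{min}}=\Map{\cl U}$ is correct but is not the crux; and note that the support of $\Alg{\cl L_1}$ is a preorder-type set, not the diagonal, though that is a side issue.
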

\proof By Theorem \ref{11} we have $$ \Alg{\cl L_1} =[\cl V\cl U] , \;\;\;\Alg{\cl L_2}=[\cl U\cl V]. $$
By \cite{stwt} the spaces $\cl V_{min}, \cl U_{min}$ are equal to the weak* closure of the pseudointegral operators which contain. 
Since the product of two pseudointegral operators is also a pseudointegral operator \cite{dav},
$$ \cl V_{min}\cl U_{min} \subset \Alg{\cl L_1}_{min} .$$ 
It remains to show $$ \Alg{\cl L_1}_{min} \subset[\cl V_{min}\cl U_{min} ].$$ 
Since $\Alg{\cl L_1}_{min} $ is the smallest masa bimodule $\cl X$ containing in $\Alg{\cl L_1}$ 
such that $ \mathrm{Ref}(\cl X) =\Alg{\cl L_1},$ it suffices to prove that 
$\Alg{\cl L_1}=  \mathrm{Ref}(\cl V_{min}\cl U_{min} ) $ or equivalently $$\cl V\cl U\subset \mathrm{Ref}(\cl V_{min}\cl U_{min} )  .$$ 
We fix operators $A\in \cl V, B\in \cl U$ and an arbitrary vector $\xi .$ Since 
$\mathrm{Ref}(\cl U_{min} )=\cl U ,$ we have $$B(\xi )\in \overline{\cl U_{min}\xi }$$ 
and since $\mathrm{Ref}(\cl V_{min} )=\cl V ,$ we have $$A(B(\xi ))\in \overline{\cl V_{min}B(\xi) }.$$
Thus  $$A(B(\xi ))\in \overline{\cl U_{min}\cl V_{min}\xi }.$$ We proved that $AB\in  \mathrm{Ref}(\cl V_{min}\cl U_{min} ) .$
The conclusion is that  $ \Alg{\cl L_1 } =[ \cl V_{min}\cl U_{min} ]  $ and similarly 
$\Alg{\cl L_2}_{min} =[ \cl U_{min}\cl V_{min} ] . \qquad \Box$

\begin{lemma}\label{44} Let  $\cl L_1, \cl L_2$ be CSLs acting on separable Hilbert spaces and $\theta : \cl L_1\rightarrow \cl L_2$ be an onto continuous  CSL 
homomorphism. Put
$$\cl U=\{T: \theta (P)^\bot TP=0\;\;\forall P\in \cl L_1\}$$
$$\cl V=\{T: P^\bot T\theta (P)=0\;\;\forall P\in \cl L_1\}.$$ 
Then $\Alg{\cl L_2}_{min} =[ \cl U_{min}\cl V_{min} ].$\end{lemma}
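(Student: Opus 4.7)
The plan is to reduce Lemma~\ref{44} to the argument of Lemma~\ref{43} by supplying the missing identity $\Alg{\cl L_2}=[\cl U\cl V]$ through Theorem~\ref{42}. In Lemma~\ref{43} this identity was handed over directly by Theorem~\ref{12}; here it must be extracted from the bilattice version of the statement, after which the pseudointegral argument of Lemma~\ref{43} can be transcribed essentially word for word.

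First, I would translate the CSL picture into the bilattice one. Put
$$\cl S^i=\{(P,Q)\in\cl L_i\times\cl L_i^\bot:\ P\leq I-Q\},\quad i=1,2,$$
so that $\mathfrak{M}(\cl S^i)=\Alg{\cl L_i}$ (the essential bilattice of a CSL algebra, cf.\ Proposition~\ref{37}). Set $\phi(P):=\theta(P)$ for $P\in\cl L_1$ and $\psi(Q):=\theta(Q^\bot)^\bot$ for $Q\in\cl L_1^\bot$; elementary checks (order preservation, sending $\vee$ to $\vee$ and $\wedge$ to $\wedge$, and ontoness of $\psi$ from that of $\theta$) show that $\phi\oplus\psi:\cl S^1\to\cl S^2$ is a continuous onto bilattice homomorphism. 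Unpacking Definition~\ref{411} for this $\phi\oplus\psi$ yields exactly $\cl V_1=\cl U$ and $\cl W_1=\cl V$, so Theorem~\ref{42} gives
$$[\cl U\cl V]=[\cl V_1\cl W_1]=\Alg{\cl S_l^2}=\Alg{\cl L_2}.$$

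With the identity $[\cl U\cl V]=\Alg{\cl L_2}$ in hand, the proof finishes along the lines of Lemma~\ref{43}. By \cite{stwt}, $\cl U_{min}$ and $\cl V_{min}$ are the weak* closures of the pseudointegral operators they contain, and since the composition of two pseudointegral operators is pseudointegral \cite{dav}, we obtain $\cl U_{min}\cl V_{min}\subset\Alg{\cl L_2}_{min}$. For the reverse inclusion, using that $\Alg{\cl L_2}_{min}$ is by definition the smallest weak* closed masa bimodule in $\Alg{\cl L_2}=[\cl U\cl V]$ whose reflexive hull is $\Alg{\cl L_2}$, it suffices to check $\cl U\cl V\subset\mathrm{Ref}([\cl U_{min}\cl V_{min}])$. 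This is the same two-step vector chase as in Lemma~\ref{43}: for $A\in\cl U$, $B\in\cl V$ and $\xi\in H_2$, the identity $\mathrm{Ref}(\cl V_{min})=\cl V$ places $B\xi$ in $\overline{\cl V_{min}\xi}$, and then $\mathrm{Ref}(\cl U_{min})=\cl U$ places $AB\xi$ in $\overline{\cl U_{min}\cl V_{min}\xi}$.

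The main obstacle is verifying cleanly that the auxiliary spaces $\cl V_1,\cl W_1$ produced by Definition~\ref{411} really coincide with the $\cl U,\cl V$ of the statement, and that the formula $\psi(Q)=\theta(Q^\bot)^\bot$ indeed furnishes a continuous onto bilattice homomorphism together with $\phi=\theta$. Both are purely definitional but demand careful bookkeeping between the CSL and bilattice conventions, in particular the use of $Q^\bot$ in the definition of $\psi$. Once these coincidences are settled, the remainder follows from Theorem~\ref{42} and the same reflexive-hull argument as in Lemma~\ref{43}, with no new analytic input.
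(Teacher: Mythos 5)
Your proof is correct, but it is organized differently from the paper's. The paper proves Lemma \ref{44} by running the doubling trick at the level of the lemma itself: it forms the graph CSLs $\cl N_1=\{P\oplus P\}$, $\cl N_2=\{P\oplus\theta(P)\}$, notes that $\rho:P\oplus P\mapsto P\oplus\theta(P)$ is a CSL \emph{isomorphism}, applies Lemma \ref{43} to get $\Alg{\cl N_2}_{min}=[Op(\rho)_{min}Op(\rho^{-1})_{min}]$, and reads off the $(2,2)$ corner. You instead use the doubling only implicitly (it is buried in the proof of Theorem \ref{42}) to secure the single identity $[\cl U\cl V]=\Alg{\cl L_2}$ via the bilattice homomorphism $\phi\oplus\psi$ with $\phi=\theta$, $\psi(Q)=\theta(Q^\bot)^\bot$, and then you rerun the two halves of Lemma \ref{43}'s argument directly on $\cl U,\cl V$: pseudointegral operators for $\cl U_{min}\cl V_{min}\subset\Alg{\cl L_2}_{min}$, and the two-step vector chase for the reverse inclusion. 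Both routes are sound; your identification $\cl V_1=\cl U$, $\cl W_1=\cl V$ and the verification that $\phi\oplus\psi$ is a continuous onto bilattice homomorphism are exactly the bookkeeping needed, and they check out. What your version buys is that you avoid the paper's implicit step of identifying $(\Alg{\cl N_2})_{min}$, $Op(\rho)_{min}$, $Op(\rho^{-1})_{min}$ entrywise with the matrices of the $_{min}$'s of their corners before extracting the $(2,2)$ entry; what it costs is a verbatim repetition of Lemma \ref{43}'s analytic argument rather than a one-line citation.
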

\proof We define the CSLs 
$$ \cl N_1=\{P\oplus P: P\in \cl L_1\} , \;\;\;\;\cl N_2=\{P\oplus \theta (P): P\in \cl L_1\}. $$
The map $$\rho : \cl N_1\rightarrow \cl N_2: P\oplus P\rightarrow P\oplus \theta (P)$$ is 
a CSL isomorphism. We can see that 
$$ \Alg{\cl N_1} =\left (\begin{array}{clr} \Alg{\cl L_1} & \Alg{\cl L_1}  \\ \Alg{\cl L_1} & \Alg{\cl L_1}   \end{array}\right), $$ 
$$ \Alg{\cl N_2} =\left (\begin{array}{clr} \Alg{\cl L_1} & \cl V  \\ \cl U & \Alg{\cl L_2}   \end{array}\right), $$ 
$$ Op(\rho ) =\left (\begin{array}{clr} \Alg{\cl L_1} & \Alg{\cl L_1}  \\ \cl U & \cl U   \end{array}\right), $$ 
$$ Op(\rho^{-1} ) =\left (\begin{array}{clr} \Alg{\cl L_1} & \cl V  \\ \Alg{\cl L_1} & \cl V     \end{array}\right). $$
Since Lemma \ref{43} implies $$\Alg{\cl N_2} _{min}=[Op(\rho )_{min} Op(\rho^{-1} )_{min} ],$$ 
we have  $\Alg{\cl L_2}_{min} =[ \cl U_{min}\cl V_{min} ]. \qquad \Box$

\begin{theorem}\label{45} Let $\cl S^1, \cl S^2$ be strongly closed commutative bilattices acting on separable Hilbert spaces and $$\phi \oplus \psi : 
\cl S^1\rightarrow \cl S^2$$ be a continuous onto bilattice homomorphism. If $ \cl U_1=\mathfrak M (\cl S^1) , 
\cl U_2=\mathfrak M (\cl S^2) $ and $\cl U_1$ is synthetic, then $\cl U_2$ is synthetic. 
\end{theorem}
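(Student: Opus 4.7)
My plan is to combine Theorem~\ref{42} with the matrix-block Lemma~\ref{44} and the pseudointegral characterisation of $(\cdot)_{min}$ used in Lemma~\ref{43}. By Theorem~\ref{42}, $\cl U_2=[\cl V_2\cl U_1\cl W_1]$, so it suffices to show $\cl V_2\cl U_1\cl W_1\subseteq(\cl U_2)_{min}$.

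First, I would verify $\cl V_2\cl U_1\cl W_1\subseteq\cl U_2$ by a direct computation. Given $(P,Q)\in\cl S^2$, surjectivity of $\phi\oplus\psi$ provides $(P',Q')\in\cl S^1$ with $\phi(P')=P$, $\psi(Q')=Q$; then for $V\in\cl V_2$, $U\in\cl U_1$, $W\in\cl W_1$,
$$QVUWP=\psi(Q')VUW\phi(P')=\psi(Q')V(Q'UP')W\phi(P')=0,$$
using $\psi(Q')V=\psi(Q')VQ'$, $W\phi(P')=P'W\phi(P')$, and $Q'UP'=0$.

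Second, to access the min-structure, I would define the CSLs $\cl L_i=\{P\oplus Q^\bot:(P,Q)\in\cl S^i\}$ acting on $H_1\oplus H_2$ and $K_1\oplus K_2$. Then $\Alg{\cl L_i}$ is a $2\times 2$ matrix of bimodules with $\cl U_i$ in the $(2,1)$-entry, and $\theta(P\oplus Q^\bot):=\phi(P)\oplus\psi(Q)^\bot$ is a continuous onto CSL homomorphism $\cl L_1\to\cl L_2$. Lemma~\ref{44} gives $(\Alg{\cl L_2})_{min}=[\Xi_{min}\cl V_{\theta,min}]$ with $\Xi=Op(\theta)$ and $\cl V_\theta$ the dual Op-space; since the diagonal masa projections commute with the minimisation, $(\cl U_2)_{min}$ is the $(2,1)$-block of the right-hand side, and $\Xi, \cl V_\theta$ themselves decompose into blocks whose diagonals involve $\cl V_1,\cl V_2,\cl W_1,\cl W_2$ of Definition~\ref{411}. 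Exploiting $\cl U_1=(\cl U_1)_{min}=$ w*-closure of pseudointegral operators in $\cl U_1$, together with Davidson's \cite{dav} result that products of pseudointegrals are pseudointegral, one concludes that for $U$ pseudointegral in $\cl U_1$ and $V\in\cl V_2$, $W\in\cl W_1$, the product $VUW$ is pseudointegral in $\cl U_2$, hence in $(\cl U_2)_{min}$. Separate weak*-continuity of multiplication then yields
$$\cl U_2=[\cl V_2\cl U_1\cl W_1]=[\cl V_2(\cl U_1)_{min}\cl W_1]\subseteq(\cl U_2)_{min}\subseteq\cl U_2,$$
so $\cl U_2=(\cl U_2)_{min}$, the required syntheticity.

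The main obstacle is the pseudointegral-stability step: showing that $VUW$ is pseudointegral when $V,W$ are merely masa-bimodule multipliers and only $U$ is assumed pseudointegral. The doubling into $\cl L_i$ and the matrix form of Lemma~\ref{44} are precisely what permits this, since $\cl V_2$ and $\cl W_1$ appear as diagonal blocks of bimodules whose minimal versions are already identified via the CSL algebra identity, so products with them inherit the pseudointegral structure through the block-matrix computation.
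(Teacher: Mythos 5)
Your reduction is sound up to the point you yourself flag, and that is precisely where the argument breaks: everything hinges on showing $\cl V_2\cl U_1\cl W_1\subseteq(\cl U_2)_{min}$ for \emph{arbitrary} $V\in\cl V_2$, $W\in\cl W_1$. The pseudointegral machinery only gives that a product of pseudointegral operators is pseudointegral; a general element of $\cl V_2$ or $\cl W_1$ is just a bounded operator in a reflexive masa bimodule, and multiplying a pseudointegral $U\in\cl U_1$ on either side by such an operator need not yield a pseudointegral operator. Nor is $(\cl U_2)_{min}$ known to be stable under multiplication by $\cl V_2$ and $\cl W_1$ --- it is only guaranteed to be a bimodule over the masas. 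The appeal to the block form of Lemma \ref{44} does not rescue this: that lemma identifies the minimal algebra with $[\cl U_{min}\cl V_{min}]$, i.e.\ with products of the \emph{minimal} (pseudointegral-generated) versions of the off-diagonal spaces, so unpacking the $(2,1)$-block would hand you $(\cl V_2)_{min}$ and $(\cl W_1)_{min}$, not the full spaces that appear in your final display. The inclusion $[\cl V_2(\cl U_1)_{min}\cl W_1]\subseteq(\cl U_2)_{min}$ therefore remains unproved.

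The paper closes exactly this gap by running the argument in the opposite direction. Instead of trying to push $\cl V_2\cl U_1\cl W_1$ into $(\cl U_2)_{min}$, it sandwiches $\cl U_2\subseteq\Alg{\cl S_r^2}^*_{min}\,\cl U_2\,\Alg{\cl S_l^2}_{min}$ and invokes Lemma \ref{44} (applied to the onto CSL homomorphisms $\phi$ and $\psi$ separately) to write $\Alg{\cl S_r^2}^*_{min}=[(\cl V_2)_{min}(\cl W_2)_{min}]$ and $\Alg{\cl S_l^2}_{min}=[(\cl V_1)_{min}(\cl W_1)_{min}]$. Then $(\cl W_2)_{min}\cl U_2(\cl V_1)_{min}\subseteq\cl W_2\cl U_2\cl V_1\subseteq\cl U_1=(\cl U_1)_{min}$ by the synthesis hypothesis, and $(\cl V_2)_{min}(\cl U_1)_{min}(\cl W_1)_{min}\subseteq(\cl U_2)_{min}$ is now legitimate because every outer factor is a minimal space, hence generated by pseudointegral operators, and the product lands inside $\cl U_2$. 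If you want to keep your outline, replace $\cl V_2,\cl W_1$ by $(\cl V_2)_{min},(\cl W_1)_{min}$ throughout and add the sandwich step that recovers all of $\cl U_2$ from the unitality of the minimal algebras; as written, the crucial step is asserted rather than proved.
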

  \proof Let $\cl V_1, \cl V_2, \cl W_1, \cl W_2$ be as in Theorem \ref{42}. Since 
$ \cl W_2\cl U_2\cl V_1\subset  \cl U_1 $, we have 
$$ (\cl W_2)_{min}\cl U_2(\cl V_1)_{min} \subset \cl U_1  \Rightarrow$$$$ 
(\cl V_2)_{min} (\cl W_2)_{min} \cl U_2(\cl V_1)_{min} (\cl W_1)_{min } \subset (\cl V_2)_{min} \cl U_1  (\cl W_1)_{min } 
.$$ Since $\cl U_1= (\cl U_1)_{min}$ and $\cl V_2\cl U_1\cl W_1\subset \cl U_2$ we have 
$ (\cl V_2)_{min}\cl U_1(\cl W_1)_{min}\subset (\cl U_2)_{min}.$ Thus 
$$(\cl V_2)_{min} (\cl W_2)_{min} \cl U_2(\cl V_1)_{min} (\cl W_1)_{min } \subset (\cl U_2)_{min}.$$ 
By Lemma \ref{44} $$ \Alg{S_r^2}^*_{min}=[(\cl V_2)_{min} (\cl W_2)_{min}] \;\;\;  
\Alg{S_l^2}_{min}  =[(\cl V_1)_{min} (\cl W_1)_{min}] .$$ Therefore 
$\cl U_2\subset \Alg{S_r^2}^*_{min}\cl U_2\Alg{S_l^2}_{min}  \subset (\cl U_2)_{min}.$ The proof is complete. $\qquad \Box$  

\begin{theorem}\label{46} Let $\cl S^1, \cl S^2$ be strongly closed commutative bilattices and suppose that $$\phi \oplus \psi : 
\cl S^1\rightarrow \cl S^2$$ is a continuous onto bilattice homomorphism. We denote $ \cl U_1=\mathfrak M (\cl S^1) , 
\cl U_2=\mathfrak M (\cl S^2) .$

A. If $\cl U_2$  
contains a nonzero compact (or nonzero finite rank or rank one) 
operator, then $\cl U_1$  
contains a nonzero compact (or nonzero finite rank or rank one) 
operator. 

B. If $\cl S^1$ and $\cl S^2$ act on separable Hilbert spaces and $(\cl U_2)_{min}$ 
contains a nonzero compact (or nonzero finite rank or rank one) 
operator, then  $(\cl U_1)_{min}$ 
contains a nonzero compact (or nonzero finite rank or rank one) 
operator. 

\end{theorem}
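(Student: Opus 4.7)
The plan is to argue by contradiction using the inclusions provided by Theorem \ref{42}. For Part A, suppose $T\in \cl U_2$ is a nonzero operator of the given type (compact, finite rank, or rank one). Every element of the set $\cl W_2T\cl V_1$ lies in $\cl W_2\cl U_2\cl V_1\subset \cl U_1$, and each product $WTV$ inherits the relevant property (the compact and finite-rank ideals are closed under left and right multiplication by bounded operators; a product with a rank-one operator has rank at most one). Since a nonzero at-most-rank-one operator is rank one, it suffices to produce $W\in \cl W_2$ and $V\in \cl V_1$ with $WTV\neq 0$.

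Suppose, towards a contradiction, that $\cl W_2T\cl V_1=\{0\}$. Then $\cl V_2\cl W_2T\cl V_1\cl W_1=\{0\}$, and the separate weak* continuity of operator multiplication upgrades this to $[\cl V_2\cl W_2]\,T\,[\cl V_1\cl W_1]=\{0\}$. By Theorem \ref{42}, $[\cl V_2\cl W_2]=\Alg{\cl S_r^2}^*$ and $[\cl V_1\cl W_1]=\Alg{\cl S_l^2}$, and both of these unital reflexive algebras contain the identity. Taking $X=I=Y$ forces $T=0$, a contradiction, completing Part A.

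For Part B the same argument runs verbatim with each space replaced by its min-version, provided two things hold. First, $(\cl W_2)_{min}(\cl U_2)_{min}(\cl V_1)_{min}\subset (\cl U_1)_{min}$: by Lemma \ref{43} each min-space is the weak* closure of the pseudointegral operators it contains, and by the fact cited there a product of pseudointegral operators is again pseudointegral; any such triple product lies in $\cl W_2\cl U_2\cl V_1\subset \cl U_1$, hence is a pseudointegral in $\cl U_1$ and therefore in $(\cl U_1)_{min}$, and separate weak* continuity of multiplication extends the inclusion to arbitrary elements of the three min-spaces. Second, $[(\cl V_2)_{min}(\cl W_2)_{min}]=\Alg{\cl S_r^2}^*_{min}$ and $[(\cl V_1)_{min}(\cl W_1)_{min}]=\Alg{\cl S_l^2}_{min}$, each containing the identity. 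The equalities follow from Lemma \ref{44} applied inside the doubled construction of Theorem \ref{42}, precisely as in the proof of Theorem \ref{45}; the identity is present because $\Alg{\cl L}$ contains the ambient masa, whose multiplication operators are pseudointegral, placing the whole masa, and in particular $I$, inside $\Alg{\cl L}_{min}$. Given these two facts, the sandwich-by-identities contradiction of Part A transfers unchanged to the min setting, and the resulting $WTV$ inherits the compact, finite rank, or rank one property.

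I expect the main obstacle to be the bookkeeping in point (i) of Part B: the double weak* limit needed to pass from pseudointegral triples to arbitrary triples of elements in the min-spaces must be justified carefully using separate continuity, as must the verification that $I$ really lies in $\Alg{\cl L}_{min}$ via the pseudointegral realization of masa multiplication operators. Once these essentially standard checks are settled, the remainder of the proof amounts to the squeeze argument of Part A performed in the min-world.
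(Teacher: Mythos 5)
Your proposal is correct and follows essentially the same route as the paper: the sandwich argument $[\cl V_2\cl W_2]\,T\,[\cl V_1\cl W_1]=0$ forcing $T=0$ via unitality of $[\cl V_2\cl W_2]=\Alg{\cl S_r^2}^*$ and $[\cl V_1\cl W_1]=\Alg{\cl S_l^2}$ from Theorem \ref{42} for Part A, and the same squeeze in the $min$-setting via Lemma \ref{44} for Part B. The extra justifications you supply (stability of the compact/finite-rank/rank-one classes under two-sided multiplication, the pseudointegral argument for $(\cl W_2)_{min}(\cl U_2)_{min}(\cl V_1)_{min}\subset(\cl U_1)_{min}$, and the presence of $I$ in the $min$-algebras) are correct and merely make explicit what the paper leaves implicit.
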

\proof 

A. Let $\cl V_1, \cl V_2, \cl W_1, \cl W_2$ be as in Theorem \ref{42}. 
Suppose that $K\in \cl U_2$ is a nonzero compact operator. We shall prove that 
$\cl U_1$ also contains a nonzero compact operator. It 
suffices to prove that $[ \cl W_2K\cl V_1]\neq 0 .$ Indeed, if 
$[ \cl W_2K\cl V_1 ]= 0$, then $[\cl V_2\cl W_2] K [\cl V_1 \cl W_1] = 0.$
Since by Theorem \ref{42}, $[\cl V_2\cl W_2] $ and $[\cl V_1 \cl W_1] $ are unital, $K=0.$ This is a contradiction.

B. Suppose that $\cl S^1$ and $\cl S^2$ act on separable Hilbert spaces and $K\in (\cl U_2)_{min}$ is a nonzero compact operator. Since 
$\cl W_2\cl U_2\cl V_1\subset \cl U_1$ we have $(\cl W_2)_{min}K(\cl V_1)_{min}\subset (\cl U_1)_{min}.$
 It 
suffices to prove that $[ (\cl W_2)_{min}K(\cl V_1)_{min}]\neq 0 .$ Indeed, if 
$[ (\cl W_2)_{min}K(\cl V_1)_{min} ]= 0$, then $[(\cl V_2)_{min}(\cl W_2)_{min}] K [(\cl V_1)_{min} (\cl W_1)_{min}] = 0.$
By Lemma \ref{44}, $[(\cl V_2)_{min}(\cl W_2)_{min}] $ and $[(\cl V_1)_{min} (\cl W_1)_{min}] $ are unital, thus $K=0.$ This is also 
a contradiction. The proofs of the other
 assertions are similar. $\qquad \Box$

\bigskip
As was proved in \cite{ele2}, onto homomorphisms between CSLs have the properties 
described in Theorems \ref{45} and \ref{46}.

\section{Inverse results }

In this section we prove the inverse image theorem, \cite{st}, using the results of this paper 
(Theorem \ref{52}), and we present a result about the problem of when do the Borel sets 
satisfying the assumptions of the inverse image theorem support a nonzero compact (or  finite rank, or 
rank 1) operator (Theorem \ref{54}).  As we will show, Theorems \ref{52} and \ref{54} are special cases  
of the theory developed in this paper: the assumptions of the inverse image theorem 
can be used to define a homomorphism between the appropriate bilattices and then one can apply Theorems 
\ref{45} and \ref{46}.
  
We need some notions and facts. Let $(X, \mu ), (Y, \nu )$ be standard measure spaces, that is, the measures $\mu $ and $\nu $ 
are regular Borel measures with respect to some Borel structures on $X$ and $Y$ arising from complete metrizable topologies. Put
$H_1=L^2(X,\mu )$ and $H_2=L^2(Y, \nu ).$ For a function $\phi \in L^\infty (X, \mu ),$ let $M_\phi $ be the operator 
on $H_1$ given by $M_\phi (f)=\phi f, f\in H_1,$ and similarly define $M_\psi $ for $\psi \in L^\infty (Y,\nu ).$ Put 
$$ \cl D_1=\{M_\phi , \phi \in L^\infty (X, \mu )\}, 
\cl D_2=\{M_\psi  , \psi  \in L^\infty (Y, \nu  ) \}.$$ These 
algebras are masas.  We need several facts and notions from the theory of masa bimodules, \cite{arv, eks, st}.
A subset $E\subset X\times Y$ is called \textit{marginally null} if $E\subset (M_1\times Y)\cup (X\times M_2)$ 
where $\mu (M_1)=\nu(M_2)=0. $ We call two subsets $E, F\subset X\times Y$ \textit{marginally equivalent} (and write $ E\cong F$) 
if the symmetric difference of $E$ and $F$ is marginally null. A set $\sigma \subset X\times Y$ 
is called $\omega-$\textit{open} if it is 
marginally equivalent to a countable union of the form $\cup _{i=1}^\infty \alpha _i\times \beta _i$, where
$\alpha _i\subset X$ and $\beta _i\subset Y$ are measurable sets for $i\in \bb N.$ The complement of an $\omega $-open set 
is called $\omega $-\textit{closed}. An operator $T\in B(H_1, H_2)$ is said to be \textit{supported on} $\sigma $ if 
$M_{\chi _\beta }TM_{\chi _\alpha }=0$ whenever $(\alpha \times \beta)\cap  \sigma \cong \emptyset. $
(Here $\chi _\gamma $ stands for the characteristic function of the measurable subset $\gamma $). 
 Given a 
$\sigma \subset X\times Y$, let $$\frak M_{max}(\sigma )=\{T\in B(H_1, H_2): T \;\;\mbox{supported\;\;on\;\;}\sigma \}.$$
The space $\frak M_{max}(\sigma )$ is a reflexive masa bimodule, Theorem 4.1 in \cite{eks}.

For Lemma \ref{51}, we fix an $\omega $-closed set $k.$ 
 If $\alpha \subset X$ (resp. $\beta \subset Y$), we denote by $P(\alpha )$ (resp. $Q(\beta )$ ) the 
projection onto $L^2(\alpha, \mu )$ (resp. $L^2(\beta, \nu )$). We denote 
by $\cl U$ the space $\frak M_{max}(k).$ By the proof of Theorem 4.2 in \cite{eks} there exist Borel sets
 $\alpha_n\subset X,  \beta_n\subset Y $ such that $\cl U=\frak M_{max}(
X\times Y\setminus (\cup _n \alpha_n \times \beta_n ))$ and such that 
$$T\in \cl U\Leftrightarrow Q(\beta _n)TP(\alpha _n)=0,\;\;\forall \;n\;\in \;\bb N.$$
Since $k$ and $X\times Y\setminus (\cup _n \alpha_n \times \beta_n )$ are $\omega $-closed sets, they are marginally equivalent, \cite{eks}.
Suppose that $\cl S$ is the commutative strongly closed bilattice generated by 
the set $$\{(P(\alpha_n),  Q(\beta_n) ): n\in \bb N\}.$$

\begin{lemma}\label{51} $$ \cl U= \mathfrak M(\cl S) .$$
\end{lemma}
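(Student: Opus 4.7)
The plan is to prove the two inclusions $\mathfrak M(\cl S)\subseteq\cl U$ and $\cl U\subseteq\mathfrak M(\cl S)$ separately. The forward inclusion is immediate: if $T\in\mathfrak M(\cl S)$, then $QTP=0$ for every $(P,Q)\in\cl S$, and in particular $Q(\beta_n)TP(\alpha_n)=0$ for each $n\in\bb N$ since these generating pairs lie in $\cl S$; hence by the characterization $T\in\cl U\iff Q(\beta_n)TP(\alpha_n)=0$ for all $n$ recalled just before the lemma, $T\in\cl U$.

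For the reverse inclusion, introduce the set $\cl T=\{(P,Q)\in\cl D_1\times\cl D_2:Q\cl UP=0\}$. The pairs $(0,0),(I,0),(0,I)$ and all generators $(P(\alpha_n),Q(\beta_n))$ lie in $\cl T$, and $\cl T_l\subseteq\cl D_1$, $\cl T_r\subseteq\cl D_2$ consist of commuting projections. The main step is to verify that $\cl T$ is closed under arbitrary suprema and infima in opposite coordinates; once this is established, Theorem \ref{21} yields that $\cl T$ is a strongly closed commutative bilattice, and because $\cl S$ is the smallest such bilattice containing the generators, $\cl S\subseteq\cl T$, which is precisely the statement $\cl U\subseteq\mathfrak M(\cl S)$.

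For the key closure, fix $(P_i,Q_i)_{i\in I}\subseteq\cl T$ and $T\in\cl U$. To see $(\wedge_iQ_i)T(\vee_iP_i)=0$: the range of $\vee_iP_i$ is the closed linear span of $\bigcup_iP_i(H_1)$, so any $f$ there is a norm-limit of finite sums of vectors $g=P_jg$; the assumption $Q_jTP_j=0$ gives $Tg\in Q_j^\bot(H_2)\subseteq(\wedge_iQ_i)^\bot(H_2)$, and linearity together with continuity of $T$ yields $Tf\in(\wedge_iQ_i)^\bot(H_2)$, i.e.\ $(\wedge_iQ_i)Tf=0$. For the dual relation $(\vee_iQ_i)T(\wedge_iP_i)=0$: any $f\in(\wedge_iP_i)(H_1)$ satisfies $f=P_jf$ for every $j$, so $Q_jTf=Q_jTP_jf=0$, whence $Tf\perp Q_j(H_2)$ for every $j$, i.e.\ $Tf\perp(\vee_iQ_i)(H_2)$.

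The only step requiring genuine attention is the arbitrary meet/join closure above, but it is a direct Hilbert-space computation exploiting only that the projections lie in commuting masas; no real obstacle arises, and the lemma reduces to an application of Theorem \ref{21}.
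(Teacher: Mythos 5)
Your proof is correct and follows essentially the same route as the paper: the easy inclusion $\mathfrak M(\cl S)\subseteq\cl U$ via the generating pairs, and the reverse inclusion by exhibiting a strongly closed commutative bilattice of annihilating pairs containing the generators and invoking minimality of $\cl S$. The only cosmetic difference is that you verify the arbitrary meet/join closure of $Bil(\cl U)\cap(\cl D_1\times\cl D_2)$ by a direct Hilbert-space computation, whereas the paper encodes the same fact through the sup-preserving map $\Map{\cl U}$.
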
  
\proof Suppose that $\phi =\Map {\cl U}.$ If $T\in \mathfrak M(\cl S) $, then $Q(\beta_n)T P(\alpha _n)=0$ for all $n.$ 
 So $T\in \cl U, $ and thus $\cl U\supset  \mathfrak M(\cl S) .$ For the converse, we define the set 
$$\cl L=\{ (P(\alpha ), Q(\beta )) : \;\;(P(\alpha ), Q(\beta ))\in \cl S , \;\;\phi(P( \alpha))\leq 
Q( \beta)^ \bot \}.$$
Since $\phi $ is sup preserving we can prove that $\cl L$ is a strongly closed bilattice. Since 
$$\{(P(\alpha _n), Q(\beta _n)): \;\;n \;\;\in \;\;\bb N \}\subset \cl L$$ we have $\cl L\supset \cl S.$  
So if $T\in \cl U$ and $(P(\alpha ), Q(\beta )) \in \cl S$, then $\phi (P(\alpha ))\leq Q(\beta )^\bot .$ 
Thus as $\phi (P(\alpha ))^\bot TP(\alpha )=0$ we obtain $Q(\beta )TP(\alpha )=0.$ We have proved that $\cl U= \mathfrak M(\cl S) . \qquad \Box$

\medskip

We call $\cl S$ the $k$ \textit{generated bilattice}.

\medskip

If $(X, \mu )$ and $(Y, \nu )$ are Borel spaces, an $\omega $-closed  subset $\sigma \subset X\times Y$  is called a set of 
$\mu \times \nu $ synthesis if the masa bimodule $$\frak M_{max}(\sigma  )\subset B(L^2(X,\mu ), L^2(Y,\nu ))$$ 
is synthetic.

\begin{theorem} \label{52} (Inverse Image Theorem) Suppose that $ (X, \mu ), (Y, \nu ), (X_1, \mu_1 ), (Y_1, \nu_1 ), $ 
 are Borel spaces and that $\theta : X\rightarrow X_1, \;\;\;\rho : Y\rightarrow Y_1$ are Borel mappings. 
We denote by $\theta _*(\mu ), \rho _*(\nu )$ 
the measures given by $\theta_*( \mu)(\alpha ) =\mu (\theta ^{-1}(\alpha )), $ $\rho_*( \nu )(\beta )=
\nu (\rho ^{-1}(\beta ))$ for every Borel set $\alpha \subset X_1, $ $
 \beta \subset Y_1$  and we assume that they are absolutely continuous with respect to measures $\mu _1$ and $\nu _1$ 
respectively. If $k_1$ is an $\omega -$closed set of $\mu _1\times \nu _1$ synthesis, then $k=(\theta \times \rho )^{-1}(k_1)$ 
is a  set of $\mu \times \nu $ synthesis.
\end{theorem}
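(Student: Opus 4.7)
The plan is to realize Theorem \ref{52} as an instance of Theorem \ref{45} by constructing a continuous onto bilattice homomorphism between the bilattices generated by $k_1$ and by $k$. Let $\cl S^1$ be the bilattice generated by $k_1$, acting on $L^2(X_1,\mu_1)\times L^2(Y_1,\nu_1)$, and $\cl S^2$ the bilattice generated by $k$, acting on $L^2(X,\mu)\times L^2(Y,\nu)$; by Lemma \ref{51} we have $\mathfrak{M}(\cl S^1)=\mathfrak{M}_{max}(k_1)$ and $\mathfrak{M}(\cl S^2)=\mathfrak{M}_{max}(k)$. From the proof of Lemma \ref{51}, there exist Borel sets $\alpha_n\subset X_1$, $\beta_n\subset Y_1$ whose associated pairs $(P(\alpha_n),Q(\beta_n))$ generate $\cl S^1$; since $(\theta\times\rho)^{-1}$ commutes with complements and countable unions, the pairs $(P(\theta^{-1}(\alpha_n)),Q(\rho^{-1}(\beta_n)))$ serve as analogous generators of $\cl S^2$.

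Next, I would exploit the hypotheses $\theta_\ast\mu\ll\mu_1$ and $\rho_\ast\nu\ll\nu_1$: composition with $\theta$ and with $\rho$ induces normal $\ast$-homomorphisms $L^\infty(X_1,\mu_1)\to L^\infty(X,\mu)$ and $L^\infty(Y_1,\nu_1)\to L^\infty(Y,\nu)$, well defined precisely because $\mu_1$-null sets pull back to $\mu$-null sets (and analogously for $\nu$). Restricting to projections gives sup- and inf-preserving maps $\phi\colon P(\alpha)\mapsto P(\theta^{-1}(\alpha))$ and $\psi\colon Q(\beta)\mapsto Q(\rho^{-1}(\beta))$. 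To verify that $\phi\oplus\psi$ carries $\cl S^1$ into $\cl S^2$, I would check the generators first: $(\alpha_n\times\beta_n)\cap k_1$ is marginally null, and its preimage $(\theta^{-1}(\alpha_n)\times\rho^{-1}(\beta_n))\cap k$ is then marginally null, again by the absolute continuity assumption. The property extends to all of $\cl S^1$ because both bilattices are closed under the operations $(\vee,\wedge)$ and $(\wedge,\vee)$ of Theorem \ref{21}, and $\phi,\psi$ preserve those operations.

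For onto-ness, the image $(\phi\oplus\psi)(\cl S^1)$ contains every generator of $\cl S^2$ and is itself closed under arbitrary suprema and infima in each coordinate, hence by Theorem \ref{21} is a strongly closed commutative bilattice; being contained in $\cl S^2$ and containing its generators, it must equal $\cl S^2$. Continuity of the bilattice homomorphism is automatic from the normality of the underlying $\ast$-homomorphisms. Theorem \ref{45} then yields at once that $\mathfrak{M}_{max}(k)$ is synthetic. The main delicate step is verifying that $\phi\oplus\psi$ actually takes $\cl S^1$ into $\cl S^2$: this couples the absolute continuity of the pushforwards (to preserve marginal nullity under $(\theta\times\rho)^{-1}$) with the explicit generator description furnished by Lemma \ref{51}, and once those are in place the rest reduces cleanly to Section 4's transfer machinery.
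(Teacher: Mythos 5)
Your proposal is correct and follows essentially the same route as the paper: use the absolute continuity of $\theta_*\mu$ and $\rho_*\nu$ to induce weak* continuous homomorphisms of the masas, transport the $k_1$-generated bilattice of Lemma \ref{51} along $P(\alpha)\mapsto P(\theta^{-1}(\alpha))$, $Q(\beta)\mapsto Q(\rho^{-1}(\beta))$, identify $\mathfrak{M}$ of the image with $\mathfrak{M}_{max}(k)$ via the generators $\theta^{-1}(\alpha_n)\times\rho^{-1}(\beta_n)$, and invoke Theorem \ref{45}. The only cosmetic difference is that the paper takes $\cl S$ to be the image of $\cl S^1$ (so surjectivity is automatic and the work lies in checking $\mathfrak{M}(\cl S)=\mathfrak{M}_{max}(k)$), whereas you define $\cl S^2$ as the $k$-generated bilattice and check surjectivity instead; these amount to the same verification.
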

\proof Put $ \cl U_1=\frak M_{max}(k_1) , \;\;\cl U=\frak M_{max}(k) .$ Suppose that 
 $$k_1=X_1\times Y_1\setminus (\cup _n \alpha_n \times \beta_n ).$$ Let 
$\cl S^1$ be the $k_1$ generated bilattice. 
Since $\theta _*(\mu ),$ (resp.  $\rho _*(\nu )$), is absolutely continuous with respect to $\mu _1,$  
(resp. $\nu _1$) the  maps 
$$\cl P( L^\infty (X_1, \mu _1)) \rightarrow \cl P(L^\infty (X, \mu))
: P_1(\alpha ) \rightarrow P(\theta ^{-1}(\alpha )), $$
 $$\cl P( L^\infty (Y_1, \nu  _1)) \rightarrow \cl P(L^\infty (Y, \nu )): P_1(\beta  ) 
\rightarrow P(\rho ^{-1}(\beta  
)) ,$$ are well defined and they can extend to  weak* continuous homomorphisms between the corresponding masas. Put $\cl S=\hat{\theta }\oplus \hat{\rho }(\cl S^1).$ By Theorem 
\ref{45}, it suffices to prove that $\cl U=\mathfrak M(\cl S) .$

Since, by Lemma \ref{51}, $\cl S^1$ is the bilattice generated by $$\{(P_1(\alpha_n), Q_1( \beta_n) ): \;\;n\;\;\in \bb N\},$$ 
$\cl S$ is the bilattice generated by $$\{( P(\theta ^{-1}(\alpha_n)) , Q( \rho ^{-1}(\beta_n) )  ): \;\;n\;\;\in \bb N\}. 
$$ So 
$$\mathfrak M(\cl S) =\{T: Q( \rho ^{-1}(\beta_n))  TP( \theta ^{-1}(\alpha_n) ) =0\;\;\forall \;\;n\in \bb N\}=$$$$
\frak M_{max}(X\times Y\setminus (\cup _n\theta ^{-1}(\alpha_n) \times \rho ^{-1}(\beta_n) ))=\cl U.\qquad \Box$$ 

\medskip

The proof of the following theorem is similar to that of Theorem \ref{52}, using Theorem \ref{46}.

\begin{theorem} \label{54}  Let  $ (X, \mu ), (Y, \nu ), (X_1, \mu_1 ), (Y_1, \nu_1 ), $ 
 be  Borel spaces such that $\theta_*( \mu)$ (resp. $\rho_*( \nu )$) is absolutely continuous with respect to $\mu _1,$ (resp. $\nu _1$)  
and  $E=(\theta \times \rho )^{-1}(E_1),$ where $E_1$ is a Borel subset of $X_1\times Y_1.$ If a nonzero compact operator 
(or a nonzero finite rank operator or a rank 1 operator) 
is supported on $E$, then there exists a nonzero 
compact operator (or a nonzero finite rank operator or a rank 1 operator)  supported on $E_1.$ 
Also if $\frak{M}_{min}(E)$ contains  a nonzero 
compact operator (or a nonzero finite rank operator or a rank 1 operator), then so does 
$\frak{M}_{min}(E_1).$ \end{theorem}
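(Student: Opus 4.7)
The plan is to follow the strategy of the proof of Theorem \ref{52} verbatim, but with Theorem \ref{46} playing the role of Theorem \ref{45}. The absolute continuity assumptions will again produce a continuous onto bilattice homomorphism $\hat{\theta}\oplus\hat{\rho}$ between the relevant bilattices, and the two assertions of Theorem \ref{46} transfer the existence of compact, finite rank, or rank one elements in the direction opposite to the homomorphism, which is precisely the direction the statement requires.

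Concretely, I would first reduce to the case where $E_1$ is $\omega$-closed, since this does not change either $\mathfrak{M}_{max}(E_1)$ or $\mathfrak{M}_{max}(E)$, and $\omega$-closures are preserved up to marginal equivalence by $(\theta\times\rho)^{-1}$ thanks to the absolute continuity hypothesis. By the structure result used in Lemma \ref{51}, write $E_1 \cong X_1\times Y_1 \setminus \bigcup_n \alpha_n\times\beta_n$ and let $\cl S^1$ be the $E_1$-generated bilattice, so that $\mathfrak{M}(\cl S^1) = \mathfrak{M}_{max}(E_1)$. Using absolute continuity, the assignments $P_1(\alpha)\mapsto P(\theta^{-1}(\alpha))$ and $Q_1(\beta)\mapsto Q(\rho^{-1}(\beta))$ are well defined on projections and extend to weak* continuous, sup- and inf-preserving homomorphisms $\hat{\theta}$, $\hat{\rho}$ between the corresponding masas, exactly as in the proof of Theorem \ref{52}. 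Setting $\cl S = (\hat{\theta}\oplus\hat{\rho})(\cl S^1)$, I expect $\cl S$ to be the bilattice generated by $\{(P(\theta^{-1}\alpha_n), Q(\rho^{-1}\beta_n)) : n\in\bb N\}$, and the computation at the end of the proof of Theorem \ref{52} then shows $\mathfrak{M}(\cl S) = \mathfrak{M}_{max}(E)$.

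At this point $\hat{\theta}\oplus\hat{\rho}: \cl S^1 \to \cl S$ is a continuous onto bilattice homomorphism, so Theorem \ref{46}(A) applied with $\cl U_1 = \mathfrak{M}(\cl S^1) = \mathfrak{M}_{max}(E_1)$ and $\cl U_2 = \mathfrak{M}(\cl S) = \mathfrak{M}_{max}(E)$ gives the first conclusion: a nonzero compact (respectively finite rank, rank one) operator supported on $E$ yields one supported on $E_1$. In the separable setting, Theorem \ref{46}(B) delivers the analogous statement for $\mathfrak{M}_{min}$. I expect the main obstacle to be verifying continuity of $\hat{\theta}\oplus\hat{\rho}$ at the bilattice level in the sense required by Theorem \ref{46}; this should follow from the sup-preservation of $\hat{\theta}$, $\hat{\rho}$ together with Theorem \ref{21}, but one must confirm that taking strong closure commutes with the image of the generating projections under these homomorphisms. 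A secondary subtlety is the passage from a general Borel $E_1$ to an $\omega$-closed representative compatible with the pull-back, which again rests on the absolute continuity of $\theta_*(\mu)$ and $\rho_*(\nu)$.
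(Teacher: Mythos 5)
Your proposal is correct and follows exactly the route the paper intends: the paper's own ``proof'' of Theorem \ref{54} is simply the remark that one repeats the argument of Theorem \ref{52} (same pull-back bilattice homomorphism $\hat{\theta}\oplus\hat{\rho}$ built from the absolute continuity hypotheses) with Theorem \ref{46} in place of Theorem \ref{45}, which is precisely what you do, including applying part A for $\mathfrak{M}_{max}$ and part B for $\mathfrak{M}_{min}$ in the correct direction. The two subtleties you flag (reducing a general Borel $E_1$ to an $\omega$-closed representative compatibly with $(\theta\times\rho)^{-1}$, and continuity/strong closedness of the image bilattice) are likewise glossed over in the paper's treatment of Theorem \ref{52}, so your account is, if anything, more explicit than the original.
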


\bigskip

\em{Aknowledgement:} I am indebted to Dr.~I.~Todorov for helpful discussions on the topic 
of the paper. I wish to thank the anonymous referee for a number of 
useful suggestions which helped me improve the manuscript.


\begin{thebibliography}{99}




\bibitem{arv}
 W. B. Arveson, Operator algebras and invariant subspaces,
{\em Ann. Math.} (2) 100 (1974), 433--532.





\bibitem{dav}
K. R. Davidson,
\newblock {\em Nest Algebras},
\newblock Longman, Harlow, UK, 1988.

\bibitem{ed}
D. A. Edwards, Some characterizations of commutative subspace lattices, {\em Bull. London Math. Soc.
 } 36 (2004) no. 2, 252--262.

\bibitem{ele}
G. K. Eleftherakis, TRO equivalent algebras, {\em Houston J. of Mathematics} 38 (1) 2012, 153--175.

\bibitem{ele2}
G. K. Eleftherakis, Applications of operator space theory to nest algebra bimodules, {\em Integral 
Equations and Operator Theory} 72 (4) 2012, 577--595.

\bibitem{ept}
G. K. Eleftherakis, V. I. Paulsen, I. G. Todorov, Stable isomorphism of dual operator spaces, {\em J. Funct. Anal.} 
258  (2010) , 260--278.

\bibitem{erd}
J. A. Erdos, Reflexivity for subspace maps and linear spaces of operators, {\em Proc. London Math. Soc., Third series} 
52(3) (1986), 582--600.


\bibitem{eks}  J. A. Erdos, A. Katavolos,  and V. S. Shulman,
 Rank one subspaces of bimodules over maximal abelian
selfadjoint algebras, {\em J. Funct. Anal. } 157 (1998) no. 2,
554--587.


\bibitem{ls}  A. I. Loginov and V. S. Shulman, Hereditary and
intermediate reflexivity of W*-algebras,{ \em Izv. Akad. Nauk SSSR} 39
(1975), 1260--1273; Math. USSR Izv. 9 (1975), 1189--1201.

\bibitem{stwt} V. S. Shulman, I. G. Todorov and L. Turowska, 
Sets of multipliers and closable multipliers on group algebras, preprint.




\bibitem{st}
V. S. Shulman and L. Turowska, Operator synthesis I. Synthetic sets, bilattices and tensor algebras, 
{\em J. Funct. Anal.} 209 (2004), 293--331.  




\end{thebibliography}
\end{document}